\definecolor{cream}{RGB}{203, 237, 204}
\newtheorem{theorem}{Theorem}[section]
\newtheorem{lemma}[theorem]{Lemma}
\newtheorem{claim}[theorem]{Claim}
\newtheorem{question}[theorem]{Question}
\newtheorem{problem}[theorem]{Problem}
\numberwithin{equation}{section}
\newcommand{\A}{{\mathcal{A}}}
\newcommand{\C}{{\mathcal{C}}}
\newcommand{\F}{{\mathcal{F}}}
\newcommand{\T}{{\mathcal{T}}}
\newcommand{\mH}{{\mathcal{H}}}
\title{Counting cliques with prescribed intersection sizes}
\date{}
\author{Yuhao~Zhao\thanks{Y. Zhao ({\tt zhaoyh21@mail.ustc.edu.cn}) is with the School of Mathematical Sciences, University of Science and Technology of China, Hefei, 230026, Anhui, China.} ~and~Xiande~Zhang
	\thanks{X. Zhang ({\tt drzhangx@ustc.edu.cn}) is with the School of Mathematical Sciences,
		University of Science and Technology of China, Hefei, 230026, and with Hefei National Laboratory, University of Science and Technology of China, Hefei 230088, China.
		The research of X. Zhang is supported by the National Key Research and
		Development Programs of China 2023YFA1010200 and 2020YFA0713100, the NSFC
		under Grants No. 12171452 and No. 12231014, and the Innovation Program for Quantum Science and Technology 2021ZD0302902.	}
}
\begin{document}
	\maketitle
	\begin{abstract}
		We study the generalized Tur\'an problem regarding cliques with restricted intersections, which highlights the motivation from extremal set theory. Let $L=\{\ell_1,\dots,\ell_s\}\subset [0,r-1]$ be a fixed integer set with $|L|\notin \{1,r\}$ and $\ell_1<\dots<\ell_s$, and let $\Psi_r(n,L)$ denote the maximum number of $r$-cliques in an $n$-vertex graph whose $r$-cliques are $L$-intersecting as a family of $r$-subsets. Helliar and Liu recently initiated the systematic study of the function $\Psi_r(n,L)$ and showed that $\Psi_r(n,L)\le \left(1-\frac{1}{3r}\right) \prod_{\ell\in L}\frac{n-\ell}{r-\ell}$ for large $n$, improving the trivial bound from the Deza--Erd\H{o}s--Frankl theorem by a factor of $1-\frac{1}{3r}$.  In this article, we improve their result by showing that as $n$ goes to infinity $\Psi_r(n,L)=\Theta_{r,L}(n^{|L|})$ if and only if $\ell_1,\dots,\ell_s,r$ form an arithmetic progression and fully determining the corresponding exact values of $\Psi_r(n,L)$ for sufficiently large $n$ in this case. Moreover, when $L=[t,r-1]$, for the generalized Tur\'an extension of the Erd\H{o}s--Ko--Rado theorem given by Helliar and Liu, we show a Hilton--Milner-type stability result.
	\end{abstract}
	
 \section{Introduction}	
 The generalized Tur\'{a}n problem concerns the maximum number of copies of $F$ in an $n$-vertex $\mathcal{G}$-free graph, where $F$ is a graph and $\mathcal{G}$ is a collection of graphs. When $F$ is an edge, it is exactly the classical Tur\'{a}n problem which was pioneered by Tur\'an~\cite{turan1941external}. The next natural case that $(F,\mathcal{G})=(K_{r},\{K_{\ell}\})$, where $K_r$ denotes the complete graph on $r$ vertices, was considered by Erd\H{o}s~\cite{erdos1962} in 1962. More generally, the systematic study of the generalized Tur\'{a}n problem was initiated by Alon and Shikhelman~\cite{alon2016many}, who obtained many results for various pairs of $(F,\mathcal{G})$.
 In this paper, following a recent work of Helliar and Liu~\cite{helliar2024generalized}, we consider the generalized Tur\'{a}n problem motivated by theorems from extremal set theory.

  Let us begin with some basic definitions. Denote by $[n]$ the integer set $\{1,2,\dots,n\}$. For any two integers $a<b$, let $[a,b]=\{a,a+1,\dots,b\}$. Write $2^{X}$ for the power set of a finite set $X$, and write $\binom{X}{r}$ for the set of all $r$-subsets of $X$. Subsets of $\binom{X}{r}$ are called {\it $r$-uniform hypergraphs} or {\it $r$-graphs} for short.
  Let $L$ be a set of non-negative integers. A family $\F\subset 2^{X}$ is said to be {\it $L$-intersecting} if the intersection size of any two distinct sets in $\F$ belongs to $L$.
  When $L=[t,r-1]$ for some $t\in [r-1]$, an $L$-intersecting $r$-graph $\F\subset \binom{X}{r}$ is simply said to be {\it $t$-intersecting}. For an integer set $L\subset [0,r-1]$, let $\Phi_r(n,L)$ denote the maximum possible size of an $n$-vertex $L$-intersecting $r$-graph, which has been extensively studied in extremal set theory (see, e.g., \cite{frankl2016invitation}).
  The following celebrated Deza--Erd\H{o}s--Frankl theorem provides a general upper bound on $\Phi_r(n,L)$.

  \begin{theorem}[Deza--Erd\H{o}s--Frankl~\cite{deza1978intersection}]\label{thm:DEF78}
  	Let $r \ge 3$, $n \ge 2^r r^3$ and $L \subset [0,r-1]$.
  	If $\mathcal{H}$ is an $n$-vertex $L$-intersecting $r$-graph, then  $|\mathcal{H}|\leq \prod_{\ell\in L}\frac{n-\ell}{r-\ell}$.
  	Moreover, there exists a constant $C=C(r,L)$ such that every $n$-vertex $L$-intersecting $r$-graph $\mathcal{H}$ with $|\mathcal{H}|\ge C n^{|L|-1}$ satisfies $|\bigcap_{A\in\mathcal{H}} A|\ge \min L$.
  \end{theorem}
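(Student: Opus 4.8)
The plan is to prove both assertions simultaneously by induction on $s=|L|$, splitting the analysis according to whether $0\in L$. The case $0\in L$ will follow from a clean degree‑counting argument with no largeness hypothesis, while the case $\min L\ge 1$ will be reduced to it by repeatedly ``peeling'' a common vertex; the existence of that common vertex is exactly the kernel statement, which I treat as the heart of the matter.

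Suppose first that $0\in L$, so $\ell_1=0$. For a vertex $v$ consider its link $\mathcal{H}_v=\{A\setminus\{v\}:v\in A\in\mathcal{H}\}$, a family of $(r-1)$-subsets of the remaining $n-1$ vertices. If $A,B\ni v$ are distinct then $|A\cap B|\ge 1$, so the intersection sizes occurring in $\mathcal{H}_v$ form $L_v=\{\ell-1:\ell\in L,\ \ell\ge 1\}=\{\ell_2-1,\dots,\ell_s-1\}$, of size $s-1$. Applying the induction hypothesis to $\mathcal{H}_v$ gives $\deg(v)=|\mathcal{H}_v|\le\prod_{i=2}^{s}\frac{(n-1)-(\ell_i-1)}{(r-1)-(\ell_i-1)}=\prod_{i=2}^{s}\frac{n-\ell_i}{r-\ell_i}$. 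Summing over $v$ and using $\sum_v\deg(v)=r|\mathcal{H}|$ yields $|\mathcal{H}|\le\frac{n}{r}\prod_{i=2}^{s}\frac{n-\ell_i}{r-\ell_i}=\prod_{i=1}^{s}\frac{n-\ell_i}{r-\ell_i}$, since $\ell_1=0$. This settles the bound when $0\in L$.

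Now suppose $\ell_1\ge 1$. The target bound is $\Theta_{r,L}(n^{s})$, so if $|\mathcal{H}|$ exceeds it then certainly $|\mathcal{H}|\ge Cn^{s-1}$ for $n$ large, and I may invoke the kernel statement: $\mathcal{H}$ has a common core of size at least $\ell_1\ge 1$, hence a common vertex $v\in\bigcap_{A\in\mathcal{H}}A$. Passing to the link at $v$ replaces $(n,r,L)$ by $(n-1,r-1,L-1)$ and keeps all $|\mathcal{H}|$ sets, since the map $A\mapsto A\setminus\{v\}$ is now a bijection; crucially it leaves the target product unchanged because $\frac{(n-1)-(\ell-1)}{(r-1)-(\ell-1)}=\frac{n-\ell}{r-\ell}$. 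Iterating this peeling $\ell_1$ times reduces to a family with $\min L=0$, to which the previous paragraph applies, proving the bound in the remaining case.

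It remains to establish the kernel statement, which I expect to be the main obstacle. The tool is the Erd\H{o}s--Rado sunflower lemma: any family of more than $r!(k-1)^{r}$ $r$-sets contains a sunflower with $k$ petals. The clean part is this: if $\mathcal{H}$ contains a sunflower $A_1,\dots,A_k$ with $k>r$ whose core $Y=\bigcap_iA_i$ has the \emph{minimum} size $|Y|=\ell_1$, then for any $B\in\mathcal{H}$ the sets $B\cap(A_i\setminus Y)$ are pairwise disjoint subsets of the $r$-set $B$, so fewer than $k$ of them are nonempty; picking $i$ with $B\cap(A_i\setminus Y)=\varnothing$ gives $B\cap A_i=B\cap Y$, whence $|B\cap Y|\in L$, and since $|B\cap Y|\le|Y|=\ell_1=\min L$ we get $Y\subseteq B$. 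Thus $Y\subseteq\bigcap_{A\in\mathcal{H}}A$. The genuinely delicate step is producing a sunflower whose core attains the \emph{minimum} size $\ell_1$. I would argue this by induction on $r$ together with a matching‑versus‑cover dichotomy: a convexity bound on $\sum_v\binom{\deg(v)}{2}=\sum_{\{A,B\}}|A\cap B|\ge\ell_1\binom{|\mathcal{H}|}{2}$ forces a vertex of degree $\Omega(n^{s-1})$, whose link is $(L-1)$-intersecting, still large, and by induction on $r$ has a common core of size $\ell_1-1$; adjoining $v$ gives an $\ell_1$-set $Y$ shared by $\Omega(n^{s-1})$ members. Examining the $(r-\ell_1)$-graph $\{A\setminus Y\}$, either it has a matching of size exceeding $r$ (a sunflower with core $Y$ of size $\ell_1$, finishing the argument), or a maximal matching yields a cover and hence an element $w$ in a positive fraction of the sets, enlarging the popular core to $Y\cup\{w\}$; bounding the number of iterations by $r$ and tracking the surviving subfamily sizes is precisely where the threshold $n\ge 2^{r}r^{3}$ and the constant $C(r,L)$ enter. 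The base case $s=1$ of this analysis is Deza's theorem, that a family of $r$-sets with all pairwise intersections equal to $\ell_1$ and more than $r^2-r+1$ members is a sunflower.
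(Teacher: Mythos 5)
A point of framing first: the paper does not prove Theorem~\ref{thm:DEF78} at all; it is imported verbatim from \cite{deza1978intersection} and used as a black box, so your attempt can only be assessed on its own merits rather than against an in-paper argument. Much of your outline is sound: the link/double-counting step for the case $0\in L$, the peeling reduction of the case $\min L\ge 1$ to the kernel statement, and the ``clean part'' --- a sunflower in $\mathcal{H}$ with more than $r$ petals whose core $Y$ has size \emph{exactly} $\ell_1=\min L$ forces $Y\subseteq B$ for every $B\in\mathcal{H}$ --- are all correct.

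The genuine gap sits exactly where you flag the delicacy, and it is structural, not a matter of tracking constants. Your matching-versus-cover iteration can only conclude via the clean part, which requires the sunflower core to have size exactly $\ell_1$. But after one pass through the cover branch the popular core has size $\ell_1+1$, and any sunflower produced at a later stage has core $Z$ with $|Z|>\ell_1$; for such $Z$ the clean part only yields $|B\cap Z|\in L$ for each $B\in\mathcal{H}$, which no longer forces $Z\subseteq B$, since $|B\cap Z|\le |Z|$ does not pin $|B\cap Z|$ down to $\min L$. So ``bounding the number of iterations by $r$'' terminates with a sunflower you cannot use, and the argument stalls. (The bad branch genuinely occurs: if the star of your maximum-degree vertex has common intersection of size strictly larger than $\ell_1$ --- your induction only gives a core of size \emph{at least} $\ell_1-1$ in the link, not exactly --- then every residual family you form contains no two disjoint members, and the iteration is forced through the cover branch again and again.) The repair is to rule the cover branch \emph{out} rather than iterate through it, and it uses the joint induction on $s$ that your opening sentence announces but your kernel argument never exploits: once a set $Z$ with $|Z|\ge \ell_1+1$ is contained in $\Omega(n^{s-1})$ members of $\mathcal{H}$, those members pairwise intersect in sizes belonging to $\{\ell_2,\dots,\ell_s\}$, so the first part of the theorem for this $(s-1)$-element intersection set bounds their number by $\prod_{i\ge 2}\frac{n-\ell_i}{r-\ell_i}\le n^{s-1}$; choosing $C(r,L)$ large compared with the constants involved makes this contradict the popularity of $Z$. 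Hence the very first dichotomy must land in the matching case, producing a sunflower with core of size exactly $\ell_1$, and the clean part finishes. One further shortfall: since your proof of the product bound needs $|\mathcal{H}|>\prod_{\ell\in L}\frac{n-\ell}{r-\ell}$ to imply $|\mathcal{H}|\ge Cn^{s-1}$, it only establishes the theorem for $n\ge n_0(r,L)$ with $n_0$ depending on $C(r,L)$, not at the explicit threshold $n\ge 2^r r^3$ claimed in the statement --- harmless for every use of the theorem in this paper, but strictly weaker than what is stated.
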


 The Deza--Erd\H{o}s--Frankl bound is tight for several cases. For example, when $L=[0,t]$, the upper bound can be achieved by combinatorial designs~\cite{Keevash}. When $L=[t,r-1]$, it coincides with the Erd\H{o}s--Ko--Rado theorem\footnote{We also noted that the Deza--Erd\H{o}s--Frankl theorem already implies a recent conjecture of Barg, Glazyrin, Kao, Lai, Tseng and Yu~\cite[Conjecture 1.3]{Barg2024}, which is an extension of the Erd\H{o}s--Ko--Rado theorem.}~\cite{erdos1961intersection}, which states that for large $n$ every $t$-intersecting $r$-graph $\F\subset \binom{[n]}{r}$ has size at most $\binom{n-t}{r-t}$ and equality holds if and only if $\F=\{A\in \binom{[n]}{r}:T\subset A \}$ for some $T\in \binom{[n]}{t}$.
  However, in general it is very hard to find optimal constructions.

  Motivated by the study of $L$-intersecting $r$-graphs, Helliar and Liu~\cite{helliar2024generalized} recently introduced an interesting  generalized Tur\'an problem, which concerns the maximum size of an $L$-intersecting $r$-graph consisting of all $r$-cliques in an $n$-vertex graph.
  For $r\geq 3$, given a graph $G$, define its {\it associated $r$-graph} to be $
  \mH_G^r:= \{ S\in \binom{V(G)}{r} : G[S]\cong K_r\}
  $,
  where $V(G)$ denotes the vertex set of $G$ and $G[S]$ stands for the induced subgraph. For an integer set $L\subset [0, r-1]$, a graph $G$ is called {\it $(K_r, L)$-intersecting} if its associated $r$-graph $\mH_G^r$ is $L$-intersecting. We simply say $G$ is {\it $(K_r,t)$-intersecting} if $G$ is $(K_r,[t,r-1])$-intersecting, i.e., $\mH_G^r$ is $t$-intersecting.  Denote by  $N(K_r, G)$ the number of copies of $K_r$ in $G$. Helliar and Liu~\cite{helliar2024generalized} investigated the extremal function:
  \[
  \Psi_r(n,L):= \max\{ N(K_r, G): G \text{ is an $n$-vertex $(K_r, L)$-intersecting graph}\},
  \]
  which can be rephrased as the generalized Tur\'an number that counts the maximum number of $r$-cliques in an $n$-vertex graph without two $r$-cliques whose intersection has size in $[0,r-1]\setminus L$.

  It is clear that $\Psi_r(n,L)\le \Phi_r(n,L)$ by definition. Thus, Theorem~\ref{thm:DEF78} straightforwardly implies that for sufficiently large $n$ we have
    \begin{equation}\label{eq-from-DEF}
 	\Psi_r(n,L)\leq\prod_{\ell\in L}\frac{n-\ell}{r-\ell}=O(n^{|L|}).
    \end{equation}
  In the particular case of $L=[0,t]$ with $t\in [r-2]$, the well-known Ruzsa--Szemer\'edi $(6,3)$ theorem~\cite{ruzsa1978triple} is in fact equivalent to the statement that $n^{2-o(1)}<\Psi_3(n,\{0,1\})=o(n^2)$,
  which was extended to any fixed $t\in [r-2]$ recently by Gowers and Janzer~\cite{gowers2021generalizations} by showing that
   \begin{equation}\label{eq-Gowers-Janzer}
   n^{t+1-o(1)}<\Psi_r(n,[0,t])=o(n^{t+1})	
   \end{equation}
    as $n$ goes to infinity. Moreover, in another special case that $L = [0,r-1]\setminus \{s\}$, the values of $\Psi_r(n,L)$ have been considered in \cite{liu2021generalized,sos1976remarks}.
    Very recently, Helliar and Liu~\cite{helliar2024generalized} obtained the following general upper bound on $\Psi_r(n,L)$, which indicates that the bound in \eqref{eq-from-DEF} from the Deza--Erd\H{o}s--Frankl theorem can be improved by a constant factor (in terms of $r$) when $2\leq |L|\leq r-1$.

 \begin{theorem}[\cite{helliar2024generalized}]\label{thm:Helliar-Liu-DEF-Turan}
 Let $r \ge 3$ and let $L \subset [0,r-1]$ be a subset with  $2\leq |L|\leq r-1$.
 Then for $n \ge (2r)^{r+1}$,
 	\begin{align}\label{eq-DEFTuran0}
 	\Psi_r(n,L)	 \le \left(1-\frac{1}{3r}\right) \prod_{\ell\in L}\frac{n-\ell}{r-\ell}.
 	 	\end{align}
 \end{theorem}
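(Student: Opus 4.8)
The plan is to show that once $|\mathcal{H}|$ is large the $r$-cliques concentrate on a common core, and then to extract the factor $1-\frac{1}{3r}$ from the fact that, unlike an abstract $L$-intersecting family, our sets are genuine cliques of a graph. Write $\mathcal{H}:=\mH_G^r$, $\ell_1:=\min L$ and $m:=r-\ell_1$; since $|L|\ge 2$ we have $\ell_1\le r-2$, so $m\ge 2$. First I would clear two cheap cases. If $|\mathcal{H}|\le Cn^{|L|-1}$ for the constant $C=C(r,L)$ of Theorem~\ref{thm:DEF78}, then because each factor $\frac{n-\ell}{r-\ell}$ is increasing in $\ell$ and hence $\ge n/r$, we have $\prod_{\ell\in L}\frac{n-\ell}{r-\ell}\ge (n/r)^{|L|}=\Omega_{r,L}(n^{|L|})$, so for $n\ge (2r)^{r+1}$ the target bound holds trivially. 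Thus assume $|\mathcal{H}|>Cn^{|L|-1}$; the ``moreover'' clause of Theorem~\ref{thm:DEF78} gives a core $T:=\bigcap_{A\in\mathcal{H}}A$ with $|T|\ge \ell_1$. If $|T|\ge \ell_1+1$, fix an $(\ell_1+1)$-subset $S\subseteq T$: the traces $\{A\setminus S\}$ form an $\hat L$-intersecting family of $(r-\ell_1-1)$-sets with $|\hat L|=|L|-1$, so Theorem~\ref{thm:DEF78} yields $|\mathcal{H}|\le \prod_{i\ge 2}\frac{n-\ell_i}{r-\ell_i}=O(n^{|L|-1})$, again below the target. Hence I may assume $|T|=\ell_1$ exactly.

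Next I would pass to the link. Let $U$ be the common neighborhood of $T$ (so every clique is $T\cup A$ with $A\in\binom{U}{m}$) and set $u:=|U|\le n-\ell_1$. The reduced family $\{A:T\cup A\in\mathcal{H}\}$ is an $L'$-intersecting family of $m$-cliques of $G[U]$, where $L'=\{0\}\cup\{\ell_i-\ell_1:i\ge 2\}$ has $\min L'=0$ and empty common intersection (otherwise $|T|>\ell_1$). Writing $d(v)$ for the number of cliques through $T\cup\{v\}$, incidence counting gives $m|\mathcal{H}|=\sum_{v\in U}d(v)$, and fixing $T\cup\{v\}$ raises the minimum intersection to $\ge\ell_2$, so Theorem~\ref{thm:DEF78} gives the uniform bound $d(v)\le D:=\prod_{i\ge 2}\frac{n-\ell_i}{r-\ell_i}$. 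Together with $u\le n-\ell_1$ this merely reproves the Deza--Erd\H{o}s--Frankl value $|\mathcal{H}|\le \frac{n-\ell_1}{m}D=\prod_{\ell\in L}\frac{n-\ell}{r-\ell}$. The whole improvement therefore amounts to showing that the reduced $m$-cliques fall a constant fraction short of the set-system bound, and this must use that they are cliques of $G[U]$.

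The crux is a purely graph-theoretic obstruction that is uniform in $L$. The key observation is: if $B$ is any $(m+1)$-clique of $G[U]$ and $A$ is a reduced clique disjoint from $B$, then for $\tau\in T$ the set $(T\setminus\{\tau\})\cup B$ is an $r$-clique of $G$ meeting $T\cup A$ in exactly $\ell_1-1<\min L$ vertices, which is forbidden. Since the reduced family has empty common intersection, disjoint pairs are plentiful (indeed, if no two reduced cliques were disjoint the family would be $(L'\setminus\{0\})$-intersecting, and the ``moreover'' clause would force a nonempty common intersection, a contradiction), so this severely limits the $(m+1)$-cliques of $G[U]$. Feeding such a limitation into graph clique-counting of Turán--Zykov / Kruskal--Katona type --- for which the number of $K_m$'s in a $K_{m+1}$-restricted graph on $u$ vertices is bounded by roughly $\tfrac{m!}{m^m}\binom{u}{m}$, a constant factor below $\binom{u}{m}$ for every $m\ge 2$ --- drops the reduced count a definite fraction below its set-system maximum. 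Averaging the per-clique deficiency over the $r$ choices of the omitted core vertex $\tau$, together with a small absolute loss from the three-set configuration above, is what I expect to yield the explicit constant $1-\frac{1}{3r}$.

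The routine part is everything in the first two paragraphs: the reductions and the incidence identity are forced. The genuine difficulty lies in the deficiency estimate, and in particular in the case where the dangerous $(m+1)$-cliques of $G[U]$ are not outright forbidden but only constrained to be transversals of the reduced family (this is exactly the regime when $L$ has a long top-run reaching $r-1$, so that appending a vertex is permissible as long as it meets every clique). Handling this ``hitting-set'' structure while keeping the saving uniform over all admissible $L$ with $2\le |L|\le r-1$, and reconciling the graph clique-counting loss with the abstract Deza--Erd\H{o}s--Frankl value so as to land precisely at $1-\frac{1}{3r}$, is what I expect to be the heart of the argument.
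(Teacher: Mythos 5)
Your two reduction paragraphs are essentially sound (modulo one quantitative blemish: invoking the constant $C(r,L)$ from the second part of Theorem~\ref{thm:DEF78} only yields the bound for $n$ sufficiently large in terms of $C(r,L)$, not for the explicit threshold $n\ge (2r)^{r+1}$ in the statement). The genuine gap is that your key mechanism is empty in exactly the case that carries the whole theorem. Your obstruction needs a vertex $\tau$ to delete from the core $T$, i.e.\ it needs $\min L\ge 1$. When $0\in L$ you are reduced to $T=\emptyset$, $U=V(G)$, $m=r$, and the observation says nothing: an $(r+1)$-clique together with a disjoint $r$-clique violates no constraint, since $0$ is an allowed intersection size. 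Worse, your own link reduction always produces a family whose intersection set $L'$ contains $0$ and whose common intersection is empty, so the plan is in effect circular: it reduces the general case to the $\min L=0$ case, for which you offer no tool. This is precisely where the cited proof of Helliar and Liu does its real work: they induct on $r$ and $|L|$ with base case $L=\{0,\ell\}$, and in that base case they extract the graph-theoretic saving via the $\Delta$-system method --- pairwise disjoint sunflower cores such that every $r$-clique either contains a core or misses it (Claim~\ref{claim:S_C}) --- followed by an auxiliary graph on the cores and Tur\'an's theorem.

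Even in the case $\min L\ge 1$, the quantitative step cannot work as you describe. Your transversal constraint limits the $(m+1)$-cliques of $G[U]$, and Tur\'an--Zykov counting then bounds the number of $m$-cliques of $G[U]$ by roughly $(u/m)^m$. But the quantity you must undercut is not $\binom{u}{m}$; it is the Deza--Erd\H{o}s--Frankl value $\prod_{\ell'\in L'}\frac{u-\ell'}{m-\ell'}=\Theta\bigl(u^{|L|}\bigr)$. Since $L\subseteq[\ell_1,r-1]$ forces $|L|\le m=r-\ell_1$, with equality if and only if $L=[\ell_1,r-1]$, for every other $L$ (say $L=\{1,3\}$ with $r=6$, so $m=5$ and $|L|=2$) the Tur\'an-type bound $(u/m)^m$ exceeds the DEF bound by a factor that grows with $n$; hence forbidding, or merely constraining, $(m+1)$-cliques cannot by itself push the clique count a constant fraction below the DEF value. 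Your mechanism therefore has traction only in the interval case $L=[t,r-1]$ --- the EKR-type case, which is the easy regime and is handled separately (Theorem~\ref{thm:Helliar-Liu-EKR-Turan}) --- and gives nothing for general $L$ with $2\le|L|\le r-1$.
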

    As remarked in \cite{helliar2024generalized}, it is not difficult to see that the upper bound in \eqref{eq-DEFTuran0} does not hold for $|L|\in\{1,r\}$. Indeed, the complete graph $K_{n}$ gives a trivial example for $|L|=r$.
 	Moreover, write $G + H$ for the graph obtained from the vertex-disjoint union of graphs $G$ and $H$ by adding all edges between vertex sets $V(G)$ and $V(H)$. For $|L|=1$, one can consider the graph $K_{\ell} + \lfloor\frac{n-\ell}{r-\ell}\rfloor K_{r-\ell}$,
 	where $\lfloor\frac{n-\ell}{r-\ell}\rfloor K_{r-\ell}$ denotes the graph consisting of $\lfloor\frac{n-\ell}{r-\ell}\rfloor$ vertex-disjoint copies of $K_{r-\ell}$.

 	Our first result provides an improvement for Theorem~\ref{thm:Helliar-Liu-DEF-Turan}, by completely characterizing all parameters $(r,L)$ with $\Psi_r(n,L)=\Theta_{r,L}(n^{|L|})$ and determining the corresponding exact values of $\Psi_r(n,L)$ for large $n$, as stated below. We remark that the second part of Theorem~\ref{thm:DEF-Turan} in fact gives a generalized Tur\'an extension of the \textit{uniform eventown} problem considered in~\cite{frankl2016uniform}. Here and throughout, the {\it Tur\'an graph} $T(n,t)$ stands for the balanced complete $t$-partite graph on $n$ vertices.
 	\begin{theorem}\label{thm:DEF-Turan}
 		Let $r \ge 3$ be a fixed integer and let $L = \{\ell_1, \ldots, \ell_{s}\}\subset [0,r-1]$ be a fixed subset of size $s\notin\{1,r\}$ with $\ell_1<\ell_2<\dots<\ell_s$. Then the following statements hold.
 	\begin{enumerate}[(1)]
 	   	\item If $\ell_1,\ell_2,\dots,\ell_s,r$ do not form an arithmetic progression, then $\Psi_r(n,L) = o(n^s) $ as $n\to\infty$. If further $r-\ell_s=\ell_s-\ell_{s-1}$, then $\Psi_r(n,L) = O(n^{s-1})$.
 		\item If $\ell_1,\ell_2,\dots,\ell_s,r$ form an arithmetic progression and let $d:=r-\ell_s=\dots=\ell_2-\ell_1=(r-\ell_1)/s$ be the common difference, then there exists $n_{\ref{thm:DEF-Turan}}=n_{\ref{thm:DEF-Turan}}(r,L)$ such that for any $n>n_{\ref{thm:DEF-Turan}}$ we have
 		\begin{align}\label{eq-AP}
 			\Psi_r(n,L) = N(K_s, T(\lfloor(n-\ell_1)/d\rfloor,s)) = (1+o(1))\left(\frac{n-\ell_1}{r-\ell_1} \right)^{s},
 		\end{align}
 		and the extremal graph is unique up to isomorphism (see the graph $G_{n,r,L}$ in (\ref{gnrl})).
 	\end{enumerate}
 	\end{theorem}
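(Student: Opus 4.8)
The plan is to settle part~(2) first and then read off part~(1) from the same analysis. For the lower bound in~(2) I would write down the extremal graph $G_{n,r,L}$ explicitly: fix a core clique $T$ on $\ell_1$ vertices; partition $md$ of the remaining $n-\ell_1$ vertices into $m:=\lfloor(n-\ell_1)/d\rfloor$ blocks of size exactly $d$; distribute the blocks into $s$ classes as evenly as possible; make each block a clique, join $T$ completely to everything, join two blocks completely exactly when they lie in different classes, and attach the at most $d-1$ leftover vertices only to $T$. Since a clique meets at most one block per class and $\ell_1+sd=r$, every $r$-clique consists of all of $T$ together with one full block from each class, so the $r$-cliques biject with the transversals of the class-partition, i.e.\ with the $K_s$'s of the complete $s$-partite block graph. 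Two such cliques sharing the blocks of exactly $k$ classes meet in $\ell_1+kd$ vertices, and for $k\in\{0,\dots,s-1\}$ these are precisely $\ell_1,\dots,\ell_s$, so $G_{n,r,L}$ is $(K_r,L)$-intersecting; balancing the classes into $T(m,s)$ maximizes the transversal count and yields $N(K_s,T(m,s))=(1+o(1))((n-\ell_1)/(r-\ell_1))^s$ because $ds=r-\ell_1$.

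For the matching upper bound let $G$ be extremal, so $N(K_r,G)=\Omega(n^s)\gg n^{s-1}$ and Theorem~\ref{thm:DEF78} supplies a common core $T=\bigcap_{A\in\mH_G^r}A$ with $|T|\ge\ell_1$. If $|T|=t_0>\ell_1$, then every pairwise intersection is at least $t_0$, so $\mH_G^r$ is $(L\cap[t_0,r-1])$-intersecting with at most $s-1$ allowed sizes, and the quantitative part of Theorem~\ref{thm:DEF78} forces $N(K_r,G)=O(n^{s-1})$, which is impossible. Hence $|T|=\ell_1$ and \emph{every} $r$-clique contains $T$. Deleting $T$ and restricting to its common neighborhood, I reduce to bounding the number of $(sd)$-cliques of a graph $G''$ whose $(sd)$-cliques pairwise meet in $\{0,d,\dots,(s-1)d\}$; this is the same problem with $(r,L)$ replaced by $(sd,\{0,d,\dots,(s-1)d\})$ and core size $0$. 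The feature I will exploit throughout is that the $(sd)$-cliques of $G''$ are \emph{all} of its $(sd)$-cliques, which is what excludes illegal partial cliques below.

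Next I extract blocks. When $d\ge2$, for a vertex $v$ lying in many cliques the cliques through $v$ pairwise meet in a positive multiple of $d$, so after deleting $v$ their intersections lie in $\{d-1,\dots,(s-1)d-1\}$, and a second application of Theorem~\ref{thm:DEF78} to this link produces a common set of size $\ge d-1$; together with $v$ this is a block $B(v)\ni v$ of size (at least, and I will argue exactly) $d$ contained in every clique through $v$ (for $d=1$ set $B(v)=\{v\}$). The crux is to prove that these blocks are pairwise equal or disjoint and tile each clique into $s$ blocks, and that the block graph $H$ (blocks adjacent iff jointly extendable to a clique) is $K_{s+1}$-free: an $(s+1)$-clique of $H$ would give, after reinserting the core, an $(r+d)$-clique of $G$ two of whose $r$-subsets meet in a size outside $L$ (directly for $d\ge2$, since then $r-1\notin L$; for $d=1$ this is exactly the statement that the common core forbids $G''$ from containing $K_{s+1}$). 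Since $r$-cliques correspond to $K_s$'s of $H$, Zykov's theorem gives $N(K_r,G)=N(K_s,H)\le N(K_s,T(m,s))$ with $m\le\lfloor(n-\ell_1)/d\rfloor$, matching the construction; and the equality cases of Theorem~\ref{thm:DEF78}, of the block-size bound, and of Zykov's theorem force $|T|=\ell_1$, every block of size $d$, and $H=T(m,s)$, whence $G\cong G_{n,r,L}$.

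Finally, part~(1) follows from the same circle of ideas: the link of a vertex sends an instance with intersection set $\{\ell_1,\dots,\ell_s\}$ to one with $\{\ell_2-1,\dots,\ell_s-1\}$ and clique size $r-1$, peeling off the bottom level while preserving the top gaps, so the block extraction above continues exactly as long as the gap sequence stays constant. If $\ell_1,\dots,\ell_s,r$ is not an arithmetic progression, then at the first level where a gap exceeds $r-\ell_s$ the family locally resembles a $[0,t]$-type configuration, and a supersaturation/removal argument in the spirit of \eqref{eq-Gowers-Janzer} yields $N(K_r,G)=o(n^s)$; if in addition $r-\ell_s=\ell_s-\ell_{s-1}$, the extraction runs down through both top levels and the first lower deviation then eliminates an entire coordinate, improving this to $O(n^{s-1})$. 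The main obstacle in all of this is the block-rigidity step above, namely showing the link-cores have size exactly $d$, partition the popular vertices, and tile every clique, and then upgrading the asymptotic bound to the exact value with a unique extremizer; the intersection condition alone permits partial-block cliques (as complete multipartite examples show), and these are ruled out only by combining the Deza--Erd\H{o}s--Frankl stability with the essential fact that our family comprises \emph{all} cliques of $G$.
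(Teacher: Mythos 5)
Your outline for part (2) follows essentially the same route as the paper (common $\ell_1$-core via Theorem~\ref{thm:DEF78}, a decomposition of the reduced graph into size-$d$ blocks, an auxiliary block graph shown to be $K_{s+1}$-free because two $K_{sd}$'s inside a $K_{(s+1)d}$ meet in $sd-1\notin\{0,d,\dots,(s-1)d\}$ when $d\ge 2$, then Theorem~\ref{thm:erdos}), with the paper's ``atoms'' replaced by link-cores of popular vertices. But the step you yourself flag as ``the main obstacle'' is exactly where the paper does its real work, and deferring it leaves a genuine gap. Your blocks exist only for vertices lying in many cliques, so cliques containing a low-degree vertex are invisible to the block graph $H$; the identity $N(K_r,G)=N(K_s,H)$ is therefore false as stated, and what your argument actually yields is $N(K_r,G)\le N(K_s,T(m,s))+O(n^{s-1})$, i.e.\ only the asymptotic statement, not the exact value or uniqueness. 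Closing this requires two quantitative ingredients you do not supply: a \emph{per-vertex} bound of $O(n^{s-2})$ (not merely $O(n^{s-1})$ in total) on cliques through each vertex outside the block structure (the paper's Claim~\ref{claim:FT}, which needs a separate argument splitting on the size of the link's common core), and a comparison showing that discarding even one potential block costs $N(K_s,T(m,s))-N(K_s,T(m-1,s))=\Omega(n^{s-1})$, which dominates the leftover vertices' contribution (the paper's Claim~\ref{claim:X_0=lambda}); only then does a divisibility argument ($|A\cap X_0|\equiv 0 \pmod d$ with $|X_0|<d$) kill the error term entirely and give exactness.

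Part (1) is where the proposal genuinely fails rather than merely being incomplete. You assert it ``follows from the same circle of ideas'' via iterated links plus ``a supersaturation/removal argument in the spirit of \eqref{eq-Gowers-Janzer},'' but no such mechanism is exhibited, and link arguments alone cannot produce an $o(n^s)$ bound: already for $L=\{0,\ell\}$ with $2\ell\neq r$, the link of a vertex is $\{\ell-1\}$-intersecting and can have size $\Theta(n)$, so summing over vertices gives only $O(n^2)$. The entire content of the paper's Lemma~\ref{lem:base} is the passage from $O(n^2)$ to $o(n^2)$: delete low-degree vertices, show the sunflower cores then \emph{partition} the surviving vertex set, deduce the divisibility $\ell\mid r$, and reduce to a $(K_{r/\ell},\{0,1\})$-intersecting auxiliary graph where \eqref{eq-Gowers-Janzer} applies --- crucially using $2\ell\neq r$ to guarantee $r/\ell\ge 3$. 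Your sketch has no analogue of the partition/divisibility step, which is what makes the reduction to the removal-lemma regime possible. Likewise, your claim that a deviating lower gap ``eliminates an entire coordinate'' and yields $O(n^{s-1})$ has no supporting argument; the paper needs a mod-$q$ linear-algebra bound (Lemma~\ref{lem:|L|=3}: choose a prime power $q$ with $q\mid\ell_3$, $q\nmid 2\ell_2$) together with the recursion of Lemma~\ref{lem:relation} built on F\"uredi's structure theorem, and nothing of this kind appears in your plan. Finally, your framing ``the first level where a gap exceeds $r-\ell_s$'' mis-states the hypothesis: non-AP only means some gap \emph{differs} from $r-\ell_s$, and the two cases (top gap differing vs.\ an interior gap differing) require different treatments.
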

	
	  We mention that, in the first case of \autoref{thm:DEF-Turan}, when $\ell_1,\ell_2,\dots,\ell_s,r$ do not form an arithmetic progression and $\ell_s-\ell_{s-1}=r-\ell_s$, we indeed have $\Phi_r(n,L) = O(n^{s-1})$ for any $L$-intersecting $r$-graphs by our proof. When $L=[t,r-1]$, Helliar and Liu~\cite{helliar2024generalized} proved the following generalized Tur\'an extension of the Erd\H{o}s--Ko--Rado theorem, which is optimal and better than the bound given by \eqref{eq-DEFTuran0}. Note that \eqref{eq-AP} in fact provides an extension of this result from $d=1$ to general $d$.
 	
 	\begin{theorem}[\cite{helliar2024generalized}]\label{thm:Helliar-Liu-EKR-Turan}
 	  Let $r > t \geq 1$ be integers. There exists $n_{\ref{thm:Helliar-Liu-EKR-Turan}}=n_{\ref{thm:Helliar-Liu-EKR-Turan}}(r, t)$ such that every $(K_r, t)$-intersecting graph $G$ on $n \geq  n_{\ref{thm:Helliar-Liu-EKR-Turan}}$ vertices satisfies
 	  \[
 	  N(K_r, G) \leq
 	  N(K_{r-t},  T(n-t, r-t))=(1+o(1))\left(\frac{n-t}{r-t} \right)^{r-t},
 	  \]
 	  and equality holds if and only if $G$ is isomorphic to $K_t + T(n-t, r-t)$. 	
 	\end{theorem}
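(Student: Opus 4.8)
The plan is to reduce the problem to a pure clique-counting (generalized Tur\'an) problem on the link of a common core, using the \emph{structural} part of the Deza--Erd\H{o}s--Frankl theorem. Let $G$ be a $(K_r,t)$-intersecting graph on $n$ vertices, so that $\mH_G^r$ is $[t,r-1]$-intersecting with $\min L=t$. If $N(K_r,G)<Cn^{r-t-1}$ for the constant $C=C(r,[t,r-1])$ from \autoref{thm:DEF78}, then $N(K_r,G)=o(n^{r-t})$ and the claimed bound holds trivially for large $n$; so I would assume $N(K_r,G)\ge Cn^{r-t-1}$. The second part of \autoref{thm:DEF78} then yields a common core $T_0:=\bigcap_{A\in\mH_G^r}A$ with $|T_0|=:t_0\ge t$. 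Every $r$-clique of $G$ contains $T_0$, so writing $N$ for the set of vertices in $V(G)\setminus T_0$ adjacent to all of $T_0$, each $r$-clique has the form $T_0\cup S$ with $S$ an $(r-t_0)$-clique of $G[N]$, and this correspondence is a bijection; hence $N(K_r,G)=N(K_{r-t_0},G[N])$.

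The key observation is that $G[N]$ is $K_{r-t_0+1}$-free. Indeed, if $S$ were an $(r-t_0+1)$-clique in $G[N]$, then $T_0\cup S$ would be a copy of $K_{r+1}$ (as $T_0$ is a clique joined completely to $N\supseteq S$), and for any $v\in T_0$ the set $(T_0\setminus\{v\})\cup S$ would be an $r$-clique avoiding $v$, contradicting $T_0\subseteq A$ for every $r$-clique $A$. By the classical theorem of Zykov (the clique-counting generalization of Tur\'an's theorem), among $K_{r-t_0+1}$-free graphs on $|N|\le n-t_0$ vertices the number of copies of $K_{r-t_0}$ is maximized by the Tur\'an graph, giving $N(K_{r-t_0},G[N])\le N(K_{r-t_0},T(n-t_0,r-t_0))=\Theta(n^{r-t_0})$. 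If $t_0>t$ this is $o(n^{r-t})$ and the bound holds with room to spare; if $t_0=t$ I obtain exactly $N(K_r,G)\le N(K_{r-t},T(n-t,r-t))=(1+o(1))\left(\frac{n-t}{r-t}\right)^{r-t}$, as claimed.

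For the characterization of equality I would run this chain of inequalities backwards. Equality forces $t_0=t$ (for large $n$ the case $t_0>t$ is of strictly smaller order), and then $N(K_{r-t},G[N])=N(K_{r-t},T(n-t,r-t))$. Since $N(K_{r-t},T(m,r-t))$ is strictly increasing in $m$ for $m\ge r-t$, equality forces $|N|=n-t$, i.e.\ every vertex outside $T_0$ is adjacent to all of $T_0$; and the uniqueness part of Zykov's theorem forces $G[N]\cong T(n-t,r-t)$. Together these give $G\cong K_t+T(n-t,r-t)$. The main obstacle is the structural step of the second paragraph: recognizing that, once a common core of size exactly $t$ is extracted, the $t$-intersecting condition is equivalent to forbidding a $K_{r-t+1}$ in the link, which collapses the problem to a standard generalized Tur\'an problem. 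The remaining work—the threshold case split driven by \autoref{thm:DEF78} and the strict-monotonicity and Zykov-uniqueness bookkeeping—is routine but must be done carefully to pin down that $T_0$ has size exactly $t$ and absorbs no extra vertices.
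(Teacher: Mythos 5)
Your proof is correct. One caveat on the comparison: this theorem is quoted from \cite{helliar2024generalized} and the paper contains no proof of it; the closest in-paper argument is the proof of Theorem~\ref{thm:DEF-Turan}~(2), which generalizes the statement from common difference $1$ to general $d$ and invokes the present theorem as its $d=1$ base case. Your argument is precisely the natural $d=1$ specialization of that blueprint: extract a common core via the structural (second) part of Theorem~\ref{thm:DEF78}, pass to the common neighborhood of the core, prove the appropriate clique-freeness there, and finish with Erd\H{o}s's clique-counting theorem (Theorem~\ref{thm:erdos}, which you call Zykov's theorem) together with its uniqueness clause. The one substantive point where the two arguments diverge is the clique-freeness step, and you handle it correctly: for $d\ge 2$ the paper obtains $K_{s+1}$-freeness of its auxiliary atom graph directly from the intersection condition (two $K_{sd}$'s inside a $K_{(s+1)d}$ meet in $sd-1\notin L$), but for $d=1$ an intersection of size $r-1$ is permitted, so that route is closed; your derivation of $K_{r-t_0+1}$-freeness from the common-core property (an extra clique vertex in the link would yield an $r$-clique avoiding a core vertex) is exactly what is needed, and it is also why you can dispense with the atoms and the Frankl--Tokushige claim that the paper requires for $d\ge 2$. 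Two small points you leave implicit and should record: $T_0$ induces a clique (it sits inside some $r$-clique, which exists under your assumption $N(K_r,G)\ge Cn^{r-t-1}$), and the easy ``if'' direction of the equality statement, namely that $K_t+T(n-t,r-t)$ is indeed $(K_r,t)$-intersecting and attains the bound.
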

 	Helliar and Liu~\cite{helliar2024generalized} also gave a  stability result for Theorem~\ref{thm:Helliar-Liu-EKR-Turan}. They proved that there exists a constant $\epsilon_0>0$ such that, for all $\epsilon<\epsilon_0$ and large enough $n$ in terms of $r$ and $t$, every $n$-vertex $(K_r,t)$-intersecting
 	graph $G$ with $N(K_r, G) \ge (1-\epsilon)(\frac{n-t}{r-t})^{r-t}$ satisfies that $G-T$ can be made $(r-t)$-partite by removing at most $2\epsilon(n-t)^2$ edges for some $t$-subset $T\subset V(G)$.
 	
 	On the other hand, for intersecting uniform set systems, the classical Hilton--Milner theorem~\cite{hilton1967some} gives a strong stability result for the Erd\H{o}s--Ko--Rado theorem. An $r$-graph $\mathcal{F}$ is said to be {\it non-trivial $t$-intersecting} if $\mathcal{F}$ is $t$-intersecting and $|\bigcap_{A\in\mathcal{F}}A|< t$. The Hilton--Milner theorem determines the extremal non-trivial $1$-intersecting families, showing that every non-trivial $1$-intersecting $r$-graph $\F\subset \binom{[n]}{r}$ has size at most	$\binom{n-1}{r-1}-\binom{n-r-1}{r-1}+1$ whenever $n>2r$. Later, Frankl~\cite{frankl1978intersecting} further considered non-trivial $t$-intersecting families and characterized all extremal non-trivial $t$-intersecting families $\F\subset\binom{[n]}{r}$ for general $t$ and large enough $n$:
\begin{enumerate}[i)]
 	\item $r>2t+1$ or $r=3$, $t=1$. There exist $S_1\in\binom{[n]}{t},S_2\in\binom{[n]}{r-t+1}$ with $S_1\cap S_2=\emptyset$ such that
 	\begin{equation}\label{eq:1st-extremal}
 		\F= \left\lbrace A\in \binom{[n]}{r} : S_1\subset A,\ A\cap S_2\neq\emptyset \right\rbrace
 		\cup  \left\lbrace A\in \binom{[n]}{r} : S_2\subset A,\ |A\cap S_1|\ge t-1 \right\rbrace.
 	\end{equation}
 	\item $r\le 2t+1$. There exists $S\in\binom{[n]}{t+2}$ such that $
 	\F= \{ A\in \binom{[n]}{r} : |A\cap S|\ge t+1 \}.
 	$
\end{enumerate}
 	\noindent For small $n$, a complete result for the extremal non-trivial $t$-intersecting families was proved by Ahlswede and Khachatrian~\cite{ahlswede1996complete}. Since every edge in the associated $r$-graph of the extremal graph in Theorem~\ref{thm:Helliar-Liu-EKR-Turan} contains a fixed $t$-set, it is natural to give a stability result for Theorem~\ref{thm:Helliar-Liu-EKR-Turan} in the spirit of the Hilton--Milner theorem and determine the extremal graph with a non-trivial $t$-intersecting associated $r$-graph.
 	
 	Our second main result gives a Hilton--Milner-type stability result for Theorem~\ref{thm:Helliar-Liu-EKR-Turan}.
 	We say a graph $G$ is {\it non-trivial $(K_r,t)$-intersecting} if its associated $r$-graph $\mH_G^r$ is non-trivial $t$-intersecting.
 	\begin{theorem}\label{thm:stab}
 		Let $r > t \geq 1$ be integers. There exists $n_{\ref{thm:stab}}=n_{\ref{thm:stab}}(r,t)$ such that for all $n>n_{\ref{thm:stab}}$ the following holds.
 		If an $n$-vertex graph $G$ is non-trivial $(K_r,t)$-intersecting, then
 		\[
 		N(K_r, G) \le N(K_r,  K_{t+2} + T(n-t-2, r-t-1)),
 		\]
 		and equality holds if and only if $G$ is isomorphic to $K_{t+2} + T(n-t-2, r-t-1)$.
 	\end{theorem}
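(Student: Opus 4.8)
The plan is to isolate a constant-size ``core'' $W$ on which every $r$-clique concentrates, and then to count the cliques through $W$ by means of the clique-counting form of Tur\'an's theorem. Throughout set $\mH=\mH_G^r$. Since the claimed extremum is of order $n^{r-t-1}$, I may assume $N(K_r,G)\ge c\,n^{r-t-1}$ for a suitable constant $c=c(r,t)>0$, as otherwise the bound is immediate. Counting the $r$-cliques of the extremal graph according to the size of their trace on $V(K_{t+2})$ gives the target value
\[
N(K_r,K_{t+2}+T(n-t-2,r-t-1))=(t+2)\,N(K_{r-t-1},T(n-t-2,r-t-1))+N(K_{r-t-2},T(n-t-2,r-t-1)),
\]
where the first term records the cliques meeting $V(K_{t+2})$ in exactly $t+1$ vertices and the second those containing all of $V(K_{t+2})$.

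Call a $(t+1)$-set $P$ \emph{heavy} if it is contained in at least $\eta n^{r-t-1}$ members of $\mH$ for a small constant $\eta$; heaviness lets me choose a clique through $P$ avoiding any prescribed bounded set of vertices, as this discards only $O(n^{r-t-2})$ cliques. Two heavy sets $P_1,P_2$ must satisfy $|P_1\cap P_2|\ge t$: otherwise a generic clique through $P_1$ and a generic clique through $P_2$, each avoiding the other's bounded ``extra'' vertices, would meet only in $P_1\cap P_2$, hence in fewer than $t$ vertices, contradicting that $\mH$ is $t$-intersecting. A Helly-type dichotomy for $(t+1)$-sets that pairwise meet in at least $t$ vertices then applies: either they share a common $t$-set, or their union is a single $(t+2)$-set $W$ with every heavy set of the form $W\setminus\{w\}$. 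The first alternative forces every clique to contain that common $t$-set, contradicting non-triviality; so the heavy sets determine a $(t+2)$-set $W$ with at least three sets $W\setminus\{w\}$ heavy. Finally I upgrade this to $\mH\subseteq\{A:|A\cap W|\ge t+1\}$: if some clique $A$ had $|A\cap W|\le t$, then matching $A$ against generic cliques through the heavy sets $W\setminus\{w\}$ would force $|A\cap W|=t$ while simultaneously forcing three vertices of $W$ to lie outside $A$, whence $|A\cap W|\le t-1$, a contradiction. \emph{This structural step is the main obstacle.} As a pure $t$-intersecting family, $\mH$ could for $r>2t+1$ follow Frankl's dispersed ``type~i)'' pattern, which contains strictly more $r$-sets than the $W$-core; the place where the graph structure becomes indispensable is precisely that such a pattern cannot be realised by the cliques of a graph without creating an $r$-clique meeting some member of $\mH$ in only $t-1$ vertices. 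Making this rigidity exact, and disposing of the degenerate regimes with few heavy sets, is where the real work lies.

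With $W$ fixed and $\mH\subseteq\{A:|A\cap W|\ge t+1\}$, I partition $\mH$ according to the trace $A\cap W$. Fix a $(t+1)$-subset $T\subseteq W$. If $T$ does not span a clique there is no $A\in\mH$ with $A\cap W=T$; otherwise the map $A\mapsto A\setminus W$ injects such cliques into the $(r-t-1)$-cliques of $G_T:=G\big[\bigcap_{v\in T}N_G(v)\setminus W\big]$, a graph on at most $n-t-2$ vertices. This $G_T$ is $K_{r-t}$-free, for an $(r-t)$-clique $B\subseteq G_T$ would make $T\cup B$ an $(r+1)$-clique and then $(T\setminus\{x\})\cup B$ an $r$-clique meeting $W$ in only $t$ vertices, contradicting the core property. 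Hence Zykov's clique-counting theorem yields at most $N(K_{r-t-1},T(n-t-2,r-t-1))$ cliques with trace $T$, and the analogous argument (applied to $G\big[\bigcap_{v\in W}N_G(v)\setminus W\big]$, a $K_{r-t}$-free subgraph of each $G_T$) bounds the number of cliques with $|A\cap W|=t+2$ by $N(K_{r-t-2},T(n-t-2,r-t-1))$. Summing over the $t+2$ subsets $T$ reproduces exactly the target value displayed above, which is the desired upper bound.

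For the characterisation of equality, attaining the bound forces equality in every application of Zykov's theorem, whose extremal configuration is the balanced complete multipartite graph and is unique; thus each $G_T$ must equal $T(n-t-2,r-t-1)$ on the whole of $V\setminus W$. Consequently every vertex outside $W$ is joined to all of $W$ and $G[V\setminus W]=T(n-t-2,r-t-1)$, while the existence of an $r$-clique through each $(t+1)$-subset of $W$ forces $W$ to span a clique. Therefore $G\cong K_{t+2}+T(n-t-2,r-t-1)$, as required. Once the $(t+2)$-core is secured, the counting and the rigidity are routine consequences of Zykov's theorem and its equality case, so I expect essentially all of the difficulty to reside in the structural step of the second paragraph.
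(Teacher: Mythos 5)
Your overall skeleton matches the paper's: isolate a $(t+2)$-set $W$ such that every $r$-clique meets $W$ in at least $t+1$ vertices, split $\mH_G^r$ by its trace on $W$, bound each trace class by Erd\H{o}s's clique-counting theorem in a $K_{r-t}$-free common neighbourhood, and read off uniqueness from the equality case. Your final two paragraphs (the trace counting and the equality analysis) are correct and essentially identical to the paper's. The problem is that the structural step you yourself flag as ``the main obstacle'' is not an obstacle you defer --- it is asserted via a false implication, and the existence of the heavy sets it relies on is never proved. First, the dichotomy: in the alternative where all heavy $(t+1)$-sets share a common $t$-set $C$, it is \emph{not} true that ``every clique contains $C$.'' A clique $A$ meeting each heavy set $C\cup\{x_i\}$ in at least $t$ vertices may instead satisfy $|A\cap C|=t-1$ and contain \emph{all} the petals $x_i$ --- this is exactly Frankl's type-i) (Hilton--Milner) configuration, which is perfectly consistent with non-triviality and, for $r>2t+1$, is the \emph{larger} family in the set-system world. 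So non-triviality alone cannot kill it. The paper's proof spends its longest and most technical argument precisely here: assuming the sunflower alternative, it takes an edge $A_0$ with $C\not\subset A_0$, shows the families $\mH_x$ of cliques through $C\cup\{x\}$, $x\in A_0\setminus C$, are pairwise disjoint and all live inside a single $K_{r-t}$-free neighbourhood $N_0$, whence $|\mH_G^r|\le N(K_{r-t-1},T(|N_0|,r-t-1))+O(n^{r-t-2})$ --- roughly a factor $t+2$ below the extremal value --- contradicting maximality. That counting argument, not non-triviality, is what rules out type i), and nothing in your proposal substitutes for it.

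Second, you never establish that heavy sets exist at all, let alone that there are at least three of them of the form $W\setminus\{w\}$ (which your upgrade to $\mH\subseteq\{A:|A\cap W|\ge t+1\}$ genuinely needs). A naive double count fails: the $\Theta(n^{t+1})$ non-heavy $(t+1)$-sets can absorb all $\Theta(n^{r-t-1})$ cliques. The paper gets abundance of heavy sets through real machinery: it considers the minimal sets $T$ (of any size $>t$) that lie in an edge and meet every edge in $\ge t$ vertices, invokes Frankl's lemma that these contain no $(r-t+2)$-sunflower, applies Erd\H{o}s--Rado to bound their number by a constant, concludes $|\mH_G^r|\le\sum_{T\in\T''_{t+1}}\deg(T)+O(n^{r-t-2})$, and then needs the Gerbner--Methuku--Vizer bound $\deg(T)\le(1+o(1))\left(\frac{n-t-1}{r-t-1}\right)^{r-t-1}$ (via the observation that the common neighbourhood of $T$ is $(K_{r-t},1)$-intersecting) to force $|\T''_{t+1}|\ge t+2$ from the extremality assumption. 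Your proposal contains neither this existence argument nor any replacement for it. Since both missing pieces are exactly where the theorem's content lies --- the counting afterwards is routine, as you say --- the proposal as written does not constitute a proof.
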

 	
 	 In other words, Theorem~\ref{thm:stab} says that for any large $n$, if $G$ is an $n$-vertex $(K_r,t)$-intersecting graph with $N(K_r, G) > N(K_r,  K_{t+2} + T(n-t-2, r-t-1))$, then $G$ must have the structure that every $r$-clique in $G$ contains a fixed $t$-subset of $V(G)$.
 	 Note that the extremal structure in Theorem~\ref{thm:stab} is similar to the one in ii), in the sense that each $r$-clique has at least $t+1$ vertices in common with a fixed set of cardinality $t+2$.
 	
 	 \paragraph{Organization of the paper.}
 	  We first prove the crucial case $L=\{0,\ell\}$ with $r\neq 2\ell$ of Theorem~\ref{thm:DEF-Turan} (1) in Section~\ref{subsec:special}, and then complete the proof of Theorem~\ref{thm:DEF-Turan} (1) in Section~\ref{subsec:general}. Next, the part (2) of Theorem~\ref{thm:DEF-Turan} is proved in Section~\ref{subsec:last-case}. In Section~\ref{sec:hm}, we prove Theorem~\ref{thm:stab}. Lastly, some final remarks and open problems can be found in Section~\ref{sec:conclusion}.

\section{The maximum number of $r$-cliques in a $(K_r,L)$-intersecting graph}
     In this section, our main task is to prove Theorem~\ref{thm:DEF-Turan}.
     Recall that a collection of sets $A_1,\dots,A_k$ is called a ($k$-)sunflower (or $\Delta$-system) with core $C$ if $A_i\cap A_j=C$ for any different $i,j\in [k]$.

\subsection{The special case that $L=\{0,\ell\}$ with $r\neq 2\ell$}\label{subsec:special}
     The strategy of Helliar and Liu~\cite{helliar2024generalized} for proving Theorem~\ref{thm:Helliar-Liu-DEF-Turan} is to apply induction on both $r$ and the cardinality of the set $L$. Crucially, the key part of their proof lies in the base case that $L=\{0,\ell\}$, for which they used an interesting combination of the $\Delta$-system method and Tur\'an's theorem. To prove Theorem~\ref{thm:DEF-Turan} (1), we will also employ the induction strategy in spirit and first consider the base case.
     However, to prove our base case, we will replace the use of Tur\'an's theorem by a reductive step when combining the $\Delta$-system method.
     Our following result improves on Helliar and Liu's result for the base case. Note that here we require the sequence $0,\ell,r$  not to be an arithmetic progression.
 \begin{lemma}\label{lem:base}
     Let $r \ge 3$ and $0< \ell < r$ be fixed with $2\ell\neq r$. Then as $n$ goes to infinity we have
 	 \[
 	 \Psi_r(n,\{0,\ell\}) = o(n^2).
 	 \]
\end{lemma}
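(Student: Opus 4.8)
The plan is to show any $\{0,\ell\}$-intersecting clique family $\mH:=\mH_G^r$ has $o(n^2)$ members, the engine being the $\Delta$-system method applied \emph{vertex by vertex}. Fix a vertex $v$ lying in more than $r^2$ cliques. Any two cliques through $v$ meet in at least one point, so by the $\{0,\ell\}$-condition they meet in exactly $\ell$ points; hence the cliques through $v$ form a family of $r$-sets with all pairwise intersections equal to $\ell$. By the classical dichotomy for constant-intersection families (such a family is either of bounded size or a sunflower), and since $v$ lies in more than $r^2$ of them, they form a sunflower with an $\ell$-element core $C_v\ni v$. I would then check these cores are consistent and disjoint: every $w\in C_v$ is again heavy with $C_w=C_v$, and any clique meeting $C_v$ must contain all of $C_v$. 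Consequently the heavy vertices partition into pairwise disjoint $\ell$-cores $C_1,\dots,C_m$, every clique is a disjoint union of some of these cores together with a set of light vertices, and the cliques containing no heavy vertex number only $O(n)$ (each of their $r$ vertices lies in at most $r^2$ cliques).

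Consider first the easy regime $2\ell>r$, so that $r-\ell<\ell$. Then no clique can contain two disjoint cores, so outside the $O(n)$ light cliques every clique is $C_i\sqcup R$ with a single core $C_i$ and a light petal $R$ of size $r-\ell$. A short check (the cross terms $C_i\cap C_j$, $C_i\cap R_B$ vanish) shows that these petals are distinct and that, across the whole family, any two meet in $0$ or $\ell$ points. Since $|R|=r-\ell<\ell$, two distinct petals cannot meet in $\ell$ points, hence are disjoint; thus there are at most $n/(r-\ell)$ of them and $|\mH|=O(n)$, which is far stronger than required.

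The hard regime is $2\ell<r$ (which includes $\ell=1$). When $\ell=1$ the family is $[0,1]$-intersecting and \eqref{eq-Gowers-Janzer} of Gowers--Janzer gives $|\mH|=o(n^2)$ at once. For $\ell\ge2$ my plan is to \emph{reduce} to this case by contraction: replace each disjoint core $C_i$ by a single new vertex $c_i$, keep the light vertices, and build an auxiliary graph $G^{\ast}$ (joining $c_i$ to every light vertex adjacent in $G$ to all of $C_i$) in which a single-core clique $A=C_i\sqcup R$ becomes the $(r-\ell+1)$-clique $A^{\ast}=\{c_i\}\cup R$. Two images sharing a core now meet only in $c_i$, so the contraction turns every $\ell$-intersection located at a heavy core into a $1$-intersection and lowers the intersection parameter toward $1$; after the cleanup described below one obtains a $(K_{r'},[0,1])$-intersecting graph, $r'=r-\ell+1\ge 3$, with $\Omega(n^2)$ copies of $K_{r'}$, contradicting \eqref{eq-Gowers-Janzer}. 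The hypothesis $2\ell\neq r$ enters here in an essential way: the excluded case $2\ell=r$ is exactly when a clique can be written as a disjoint union of two cores with \emph{no} light part, so the contraction collapses onto the quadratic construction built from $T(\lfloor n/\ell\rfloor,2)$ in \autoref{thm:DEF-Turan}(2) and no gain is possible.

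I expect the crux to be controlling the \emph{residual} $\ell$-intersections that core-contraction does not remove. These come from pairs of petals sharing an $\ell$-set all of whose vertices are light, i.e.\ from low-multiplicity cores $T$ with $2\le d(T)\le r^2$; such intersections survive contraction and obstruct the $[0,1]$-property of the $A^{\ast}$. The plan is to prove that these low-multiplicity cores touch only $o(n^2)$ cliques, so that deleting the offending cliques leaves an $[0,1]$-intersecting subfamily of size $\Omega(n^2)$. This is precisely the step where the assumption that $\mH$ consists of the cliques of a \emph{graph}, rather than an arbitrary set system, is indispensable, since abstractly a $\{0,\ell\}$-intersecting $r$-graph may have $\Theta(n^2)$ edges even when $2\ell\neq r$ (e.g.\ a Steiner triple system when $\ell=1$, $r=3$). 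Making this cleanup rigorous, together with handling the cliques that contain several cores at once, is the main technical obstacle I anticipate.
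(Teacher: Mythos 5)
Your overall architecture --- heavy vertices, pairwise disjoint $\ell$-element sunflower cores, the property that every clique either contains or avoids each core, and a final appeal to the Gowers--Janzer bound \eqref{eq-Gowers-Janzer} --- is the same as the paper's, and your treatment of the regime $2\ell>r$ is correct. But your plan for the main regime $2\ell<r$ has a genuine gap, and it traces back to one missed counting observation: it is not only the all-light cliques that number $O(n)$, but \emph{every clique containing at least one light vertex}. Since each light vertex lies in at most $r^2$ cliques, summing clique-degrees over light vertices bounds the number of such cliques by $r^2n$. Once these $O(n)$ cliques are discarded, every remaining clique consists entirely of heavy vertices and hence (by your own ``clique meeting a core contains it'' property) is a disjoint union of cores \emph{with no light part at all}; in particular $\ell\mid r$ (otherwise nothing remains and the lemma is trivial), and every surviving clique is a union of exactly $r/\ell$ cores. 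This single observation dissolves both obstacles you flag at the end: there are no ``residual'' $\ell$-intersections between light petals left to clean up, and the multi-core cliques are not a side case to be handled --- they are the \emph{only} cliques that matter.

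Because you missed this, your reduction is aimed at the wrong objects. The single-core cliques $C_i\sqcup R$, for which your contraction $A\mapsto\{c_i\}\cup R$ is designed, all contain a light vertex and therefore number $O(n)$; they can never supply the $\Omega(n^2)$ copies of $K_{r-\ell+1}$ that your intended contradiction requires, so the hard-regime argument cannot close as proposed. The correct contraction (the paper's) uses the auxiliary graph $G'$ whose vertices are the cores themselves and whose edges are pairs of cores lying in a common clique: each surviving $r$-clique becomes an $(r/\ell)$-clique of $G'$; two images share at most one core (sharing two would force two $r$-cliques to intersect in at least $2\ell\notin\{0,\ell\}$ vertices); and, crucially, every $(r/\ell)$-clique of $G'$ lifts back to an $r$-clique of $G$, so the \emph{entire} clique family of $G'$ is $\{0,1\}$-intersecting and $\Psi_{r/\ell}(n/\ell,\{0,1\})=o(n^2)$ applies, with $2\ell\neq r$ entering precisely as $r/\ell\ge 3$. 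This last point exposes a second flaw in your cleanup plan: ``deleting the offending cliques'' leaves a $\{0,1\}$-intersecting \emph{subfamily}, but the deleted sets are still cliques of the graph $G^\ast$, and \eqref{eq-Gowers-Janzer} is a statement about $\Psi_{r'}$, i.e.\ about graphs whose whole $r'$-clique family is $\{0,1\}$-intersecting; a subfamily statement does not plug into it. With these repairs your argument becomes essentially the proof in the paper, which moreover implements the heavy/light classification iteratively (repeatedly deleting vertices of clique-degree less than $r^2$ and recomputing) so that in the final graph every vertex is heavy and the cores partition the whole vertex set.
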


\begin{proof}
	 Let $G$ be an $n$-vertex $(K_{r},\{0,\ell\})$-intersecting graph.
	 For each subset $C\subset V(G)$ of size $\ell$, fix a maximum sunflower $\mathcal{S}_C\subset \mH_G^r$ with core $C$. As in \cite{helliar2024generalized}, let us define $U(G):=\{u\in V(G): d_{\mH_G^r}(u)\ge r^2\}$ and
	 \[
	 \C(G):=\left\lbrace C\in \binom{V(G)}{\ell} : |\mathcal{S}_C|\ge r^2 \right\rbrace.
	 \]
	 The following claim has been proved in \cite{helliar2024generalized}.
	 \begin{claim}[{\cite[Claims 5.2--5.4]{helliar2024generalized}}]\label{claim:S_C}
		If $G$ is a $(K_{r},\{0,\ell\})$-intersecting graph, then the following properties hold:
	 \begin{enumerate}[(a)]
		\item $U(G)=\bigcup_{C\in\C(G)} C$, 
		\item $C\cap C'=\emptyset$ for any distinct $C,C'\in\C(G)$, and
		\item for every $A\in \mH_G^r$ and every $C\in\C(G)$, we have either $A\in \mathcal{S}_C$ or $A\cap C=\emptyset$.
	\end{enumerate}
	\end{claim}
	Based on these properties, Helliar and Liu~\cite{helliar2024generalized} considered an auxiliary graph and then apply Tur\'an's theorem to obtain a reasonable upper bound. Here we first greedily delete vertices with small degree in the associated $r$-graph, and then apply Claim~\ref{claim:S_C} and use a reduction.
	
	Let us consider the following iterative process. Let $G_0:=G$. For $i\ge 0$, given the graph $G_i$ from the last step, if there exists some vertex $u$ in $G_i$ with $d_{\mH_{G_i}^r}(u)< r^2$, then delete $u$ from $G_i$ to obtain a new graph $G_{i+1}$; otherwise we stop. Suppose we finally stop at the graph $G_k$.  Clearly, we have
	\begin{equation}\label{eq:edges-G_k}
		|\mH_{G}^r|\le |\mH_{G_k}^r| + r^2 k \le |\mH_{G_k}^r| + r^2 n.
	\end{equation}
	If $V(G_k)=\emptyset$, \eqref{eq:edges-G_k} gives us $|\mH_{G}^r|=O(n)$ and we are done. Thus, we can assume now that $V(G_k)\neq\emptyset$. By \eqref{eq:edges-G_k}, it suffices to show that $|\mH_{G_k}^r|\le o(n^2)$ as $n$ increases.

Note that every vertex $u$ in $G_k$ satisfies $d_{\mH_{G_k}^r}(u)\ge r^2$ and hence $U(G_k)=V(G_k)$. Since $G_k$ preserves the $(K_{r},\{0,\ell\})$-intersecting property, by Claim~\ref{claim:S_C} (a) and (b), we know that  $\C(G_k)$ forms a partition of  $V(G_k)$ into $\ell$-subsets.
	Now consider an auxiliary graph $G'$ with vertex set $V(G')=\C(G_k)$ and edge set
	\[
	E(G') = \left\lbrace  \{C,C'\}\in \binom{\C(G_k)}{2} : C\cup C' \subset A \text{ for some } A\in \mH_{G_k}^r \right\rbrace.
	\]
It is clear that $|V(G')|=|\C(G_k)|=|V(G_k)|/\ell$.	The following claim indicates that $\ell\mid r$ when $V(G_k)\neq\emptyset$. Using this claim and the condition that $G_k$ is $(K_r,\{0,\ell\})$-intersecting, we will show that $G'$ is $(K_{r/\ell}, \{0,1\})$-intersecting and then apply the upper bound in \eqref{eq-Gowers-Janzer} to get our upper bound.
	
	\begin{claim}\label{claim:reduction1} 
		When $V(G_k)\neq\emptyset$, we have $\ell\mid r$ and $|\mH_{G_k}^r|= |\mH_{G'}^{r/\ell}|$.
	\end{claim}
	\begin{proof}
For any edge $A\in \mH_{G_k}^r$ and any $\ell$-set $C\in\C(G_k)$, by Claim~\ref{claim:S_C} (c), we have either $C\subset A$ or $C\cap A=\emptyset$. Since $A\subset V(G_k)=\bigcup_{C\in\C(G_k)} C$, we have that $A$ is a disjoint union of several $C\in\C(G_k)$. Thus $\ell\mid r$.

For every $A\in \mH_{G_k}^r$,  $\{C\in\C(G_k):C\subset A\}$  forms an $(r/\ell)$-clique in the auxiliary graph $G'$ by definition, which is an $(r/\ell)$-edge in $\mH_{G'}^{r/\ell}$ . Conversely, each $(r/\ell)$-clique  $\{C_1,\dots,C_{r/\ell}\}$ in $G'$ gives an $r$-clique $A=\bigcup_{i=1}^{r/\ell}C_i$ in $G_k$. Hence $|\mH_{G_k}^r|= |\mH_{G'}^{r/\ell}|$.
\end{proof}
		

    \begin{claim}\label{claim:reduction2}
	 The graph $G'$ is $(K_{r/\ell}, \{0,1\})$-intersecting.
    \end{claim}
    \begin{proof}
    	Let $B_1,B_2$ be any two different $(r/\ell)$-edges in $\mH_{G'}^{r/\ell}$. Let $A_i=\bigcup_{C\in B_i}C$, $i=1,2$, which are two $r$-edges in $\mH_{G_k}^r$.
%
     If $|B_1\cap B_2|\ge 2$, then $|A_1\cap A_2|\ge |\bigcup_{C\in B_1\cap B_2} C|\ge 2\ell$, which is impossible as $\mH_{G_k}^r$ is $\{0,\ell\}$-intersecting. Thus, the associated $(r/\ell)$-graph $\mH_{G'}^{r/\ell}$ of $G'$ is $\{0,1\}$-intersecting, that is, $G'$ is $(K_{r/\ell}, \{0,1\})$-intersecting.
    \end{proof}

    Now let us finish the proof of the lemma. By Claims~\ref{claim:reduction1} and \ref{claim:reduction2} we have \[
    |\mH_{G_k}^r|=|\mH_{G'}^{r/\ell}|\le \Psi_{r/\ell}(|V(G')|,\{0,1\}) = \Psi_{r/\ell}(|V(G_k)|/\ell,\{0,1\}) \le \Psi_{r/\ell}(n/\ell,\{0,1\}).
    \] Then by \eqref{eq:edges-G_k} we get
    \[
    |\mH_{G}^r|\le |\mH_{G_k}^r| + r^2 n
      \le \Psi_{r/\ell}(n/\ell,\{0,1\}) + r^2 n.
    \]
    Note that $r/\ell\ge 3$ since we assume $0<\ell<r$ and $2\ell\neq r$. Then it follows from \eqref{eq-Gowers-Janzer} that $|\mH_{G}^r|\le \Psi_{r/\ell}(n/\ell,\{0,1\}) + r^2 n \le o(n^2)$ as $n\to\infty$, which finishes the proof of Lemma~\ref{lem:base}.
\end{proof}

\subsection{Completing the proof of Theorem~\ref{thm:DEF-Turan} (1)}\label{subsec:general}
    For more general $L$ and $r$ with similar restrictions, one can still employ the induction on both $r$ and $|L|$ as in \cite{helliar2024generalized}. However, we find it somewhat more convenient here to establish a recursive relation (see Lemma~\ref{lem:relation}) which is a consequence of F\"uredi's fundamental structure theorem.

 \begin{theorem}[F\"uredi~\cite{furedi1983finite}]\label{thm:furedi}
	For $r\ge 2$ and $L\subset [0,r-1]$, there exists a positive constant $c = c(r)$ such that every $L$-intersecting $r$-graph $\F$ contains a subhypergraph $\F^*\subset \F$ with $|\F^*|\ge c|\F|$ satisfying all of the following properties.
	\begin{enumerate}[(i)]
		\item The families $\mathcal{I}(F):=\{F\cap F':F'\in\F^*\}$ are isomorphic for all $F\in\F^*$;
		\item For any $A\in \mathcal{I}(F)$ with $F\in\F^*$, $A$ is the core of an $(r+1)$-sunflower in $\F^*$;
		\item For any $F\in\F^*$, $\mathcal{I}(F)$ is closed under intersection, i.e., if $A_1,A_2\in \mathcal{I}(F)$ then $A_1\cap A_2\in \mathcal{I}(F)$;
		\item For any $A\in \mathcal{I}(F)$ with $F\in\F^*$, $|A|\in L$;
		\item For any different $A,A'\in \bigcup_{F\in\F^*} \mathcal{I}(F)$, we have $|A\cap A'|\in L$.
	\end{enumerate}
 \end{theorem}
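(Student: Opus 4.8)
The plan is to derive this purely structural statement from the Erd\H{o}s--Rado sunflower lemma, in the quantitative form that any family of more than $r!(k-1)^r$ sets, each of size at most $r$, contains a sunflower with $k$ petals (equivalently, a sunflower-free family of $r$-sets has bounded size). All reductions below should cost only a multiplicative constant depending on $r$, so that the final $\F^*$ keeps a $c(r)$-fraction of $\F$. Observe first that (iv) is essentially free: for distinct $F,F'\in\F^*\subseteq\F$ the set $F\cap F'$ has size in $L$ because $\F$ is $L$-intersecting, and one reads $\mathcal{I}(F)$ with the convention $F'\neq F$ so that the size-$r$ set $F$ itself is excluded. The substance therefore lies in realizing (i), (ii), (iii) and (v) simultaneously.

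First I would homogenize the local intersection patterns. To each $F\in\F$ attach the abstract isomorphism type of the marked pair $(F,\{F\cap F':F'\in\F\})$, that is, an $r$-element ground set together with a distinguished subfamily of its power set; there are at most $2^{2^{r}}$ such types, so a pigeonhole argument isolates a subfamily of size at least $2^{-2^{r}}|\F|$ on which the type is constant. This is a first approximation to (i), but it is unstable, since shrinking the family changes the traces $F\cap F'$. I would therefore iterate, alternating type-fixing with the sunflower extraction described next, and argue that the process terminates after a number of rounds bounded in terms of $r$ while losing only a constant factor per round.

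The core of the argument is (ii). Fix a trace $A$ that occurs and that is maximal with respect to inclusion among all traces, and pass to the reduced family $\{G\setminus A: G\in\F^*,\ A\subseteq G\}$ of sets of size $r-|A|\le r$. If this reduced family is large, the sunflower lemma yields a sunflower in it whose core $B$ must be empty, since otherwise $A\cup B$ would be a trace strictly containing $A$, contradicting maximality. An empty reduced core means the corresponding $G$'s meet pairwise in exactly $A$, so $A$ is the core of an arbitrarily large sunflower in $\F^*$, in particular an $(r+1)$-sunflower. Handling the smaller, non-maximal traces requires repeating this reduction inside the structure one already controls; that the traces are then closed under intersection (iii) and that distinct traces cross-intersect back into $L$ (v) should follow from the same disjointness of petals, now tracked across several traces at once.

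The main obstacle I anticipate is precisely the coupling of the five conditions: each $\mathcal{I}(F)$ is computed inside the current subfamily, so extracting a clean sunflower for one trace can spoil the homogeneity needed for another. The clean way to untangle this is induction on $r$: after the homogenization step one finds a common kernel $C$ (possibly empty) contained in every member of a large subfamily, for instance the core of the dominant sunflower together with all sets containing it, and one applies the inductive hypothesis to the $(r-|C|)$-graph $\{F\setminus C: F\in\F^*\}$. In this way the simultaneous bookkeeping is carried out one level at a time, and the final constant $c(r)$ emerges as the product of the boundedly many per-level losses.
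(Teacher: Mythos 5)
This theorem is quoted in the paper from F\"uredi's 1983 article and never proved there, so there is no internal argument to compare against; judged on its own merits, your sketch assembles the right tools but leaves the actual heart of the theorem unproved. What you do get right: property (iv) is indeed free (with the convention $F'\neq F$, which is the correct reading of $\mathcal{I}(F)$); your petal-avoidance idea is sound and, pushed slightly further, really does derive (iii) and (v) from (ii) --- for instance, for (v), since the $r+1$ private parts of an $(r+1)$-sunflower are pairwise disjoint and any fixed set of size at most $r$ meets at most $r$ of them, one can pick petals $S\supset A$ and $T\supset A'$ with $S\cap T=A\cap A'$, whence $|A\cap A'|\in L$ because $\F$ is $L$-intersecting; and your observation that a \emph{maximal} trace whose link is large enough for the sunflower lemma must be the core of an $(r+1)$-sunflower is correct. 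So the theorem genuinely reduces to establishing (i) and (ii) simultaneously at the cost of a factor $c(r)$, and that is exactly where the proposal has no proof.

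Three concrete gaps. First, the alternation ``homogenize by pigeonhole / extract sunflowers'' is unstable --- as you yourself note, every passage to a subfamily changes all the traces $\mathcal{I}(F)$, which can destroy both the homogeneity and the sunflower-core property secured in earlier rounds --- and you supply no potential function, bounded in terms of $r$ alone, that strictly decreases per round; ``the process terminates after a number of rounds bounded in terms of $r$'' is an unsupported assertion, and overcoming precisely this instability is the substance of F\"uredi's argument. Second, your sunflower extraction only treats maximal traces with large links; a trace contained in few members must be killed by deleting members, such deletions shrink other links below the sunflower threshold and create new bad traces, and the accounting showing that this cascade costs only a constant fraction of $\F$ is entirely missing. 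Third, the fallback ``induction on $r$'' is ill-founded: the common kernel $C$ of a large subfamily will typically be empty (e.g.\ when $\F$ is a matching, or whenever no set lies in a constant fraction of the members), and then passing to $\{F\setminus C:F\in\F^*\}$ is not a reduction at all --- you would be invoking the inductive hypothesis for $r$-graphs while proving the statement for $r$-graphs; and when $C\neq\emptyset$ you have not shown that a constant fraction of members contain $C$, so restricting to them may already cost more than any constant factor. As it stands, the proposal is a plausible plan with correct peripheral lemmas, not a proof.
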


 The following relation is a variant of \cite[Proposition 6.5]{deza1985sections}. Recall that $\Phi_r(n,L)$ denotes the maximum size of an $n$-vertex $L$-intersecting $r$-graph.

\begin{lemma}\label{lem:relation}
     For $L=\{\ell_1,\dots,\ell_s\}\subset [0,r-1]$ with $0\le\ell_1<\dots<\ell_s<r$ and any $i\in [s]$ we have \[
     \Psi_r(n, L) \le
       c^{-1}\max\{ \Phi_r(n, L\setminus\{\ell_i\}), \Phi_{\ell_i}(n, \{\ell_1,\dots,\ell_{i-1}\}) \Psi_{r-\ell_i}(n-\ell_i, \{0,\ell_{i+1}-\ell_i,\dots,\ell_s-\ell_i\}) \},
     \]
     where $c=c(r)$ is the constant from Theorem~\ref{thm:furedi}.
\end{lemma}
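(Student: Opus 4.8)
The plan is to apply Füredi's structure theorem (Theorem~\ref{thm:furedi}) to the associated $r$-graph $\F:=\mH_G^r$ of an arbitrary $n$-vertex $(K_r,L)$-intersecting graph $G$, extract the regularized subhypergraph $\F^*$ with $|\F^*|\ge c|\F|$, and then bound $|\F^*|$ according to a dichotomy governed by whether the common intersection pattern $\mathcal{I}(F)$ contains a set of size exactly $\ell_i$. Since $N(K_r,G)=|\F|\le c^{-1}|\F^*|$, an upper bound on $|\F^*|$ of the stated shape will yield the lemma after taking the maximum over all such $G$.

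First I would record the dichotomy. By property~(i) the families $\mathcal{I}(F)$ are mutually isomorphic, so either every $\mathcal{I}(F)$ contains a member of size $\ell_i$, or none does. In the ``none'' case, for distinct $F,F'\in\F^*$ we have $F\cap F'\in\mathcal{I}(F)$ with $|F\cap F'|\in L$ by property~(iv) and $|F\cap F'|\neq\ell_i$; hence $\F^*$ is an $(L\setminus\{\ell_i\})$-intersecting $r$-graph on $n$ vertices and $|\F^*|\le\Phi_r(n,L\setminus\{\ell_i\})$, which is the first term of the maximum.

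The substance lies in the ``every'' case. Here I would collect the family $\A$ of all size-$\ell_i$ sets occurring in $\bigcup_{F\in\F^*}\mathcal{I}(F)$; each such $A=F\cap F'$ is a subset of an $r$-clique and is therefore itself an $\ell_i$-clique of $G$. By property~(v) any two distinct members of $\A$ meet in a set whose size lies in $L$ and is strictly below $\ell_i$, i.e.\ in $\{\ell_1,\dots,\ell_{i-1}\}$; thus $\A$ is an $\{\ell_1,\dots,\ell_{i-1}\}$-intersecting $\ell_i$-graph and $|\A|\le\Phi_{\ell_i}(n,\{\ell_1,\dots,\ell_{i-1}\})$. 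Next I would control the edges of $\F^*$ passing through a fixed core $A\in\A$ by going to the link: letting $N(A)$ denote the common neighborhood of $A$ in $G$, the map $F\mapsto F\setminus A$ sends the edges of $\F^*$ containing $A$ injectively into the $(r-\ell_i)$-cliques of $G[N(A)]$. Crucially, $G[N(A)]$ is itself $(K_{r-\ell_i},M)$-intersecting with $M=\{0,\ell_{i+1}-\ell_i,\dots,\ell_s-\ell_i\}$: for any two $(r-\ell_i)$-cliques $B_1,B_2$ of $G[N(A)]$ the sets $A\cup B_1$ and $A\cup B_2$ are $r$-cliques of $G$ meeting in $A\cup(B_1\cap B_2)$, so $\ell_i+|B_1\cap B_2|\in L$ and hence $|B_1\cap B_2|\in M$. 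Since $|N(A)|\le n-\ell_i$ and $\Psi$ is nondecreasing in the number of vertices (pad with isolated vertices), the number of edges of $\F^*$ through $A$ is at most $\Psi_{r-\ell_i}(n-\ell_i,M)$.

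Finally I would assign to each $F\in\F^*$ one core $\sigma(F)\in\A$ with $\sigma(F)\subseteq F$ (available since each $\mathcal{I}(F)$ has an $\ell_i$-set) and sum over cores:
\[
|\F^*|\le\sum_{A\in\A}\bigl|\{F\in\F^*:A\subseteq F\}\bigr|\le|\A|\cdot\Psi_{r-\ell_i}(n-\ell_i,M)\le\Phi_{\ell_i}(n,\{\ell_1,\dots,\ell_{i-1}\})\cdot\Psi_{r-\ell_i}(n-\ell_i,M),
\]
giving the second term of the maximum. I expect the main obstacle to be establishing cleanly that the link $G[N(A)]$ is $(K_{r-\ell_i},M)$-intersecting \emph{as a graph}, not merely that the image family is $M$-intersecting: this is exactly what permits invoking the clique-counting function $\Psi$ rather than only the set-system function $\Phi$ in the recursion. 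The remaining steps (the dichotomy, the bound on $|\A|$, the monotonicity of $\Psi$, and the double counting) are routine.
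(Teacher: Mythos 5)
Your proposal is correct and follows essentially the same route as the paper's proof: apply F\"uredi's structure theorem to $\mH_G^r$, split according to whether the isomorphic intersection patterns $\mathcal{I}(F)$ contain an $\ell_i$-set, bound the family of $\ell_i$-cores via property (v) by $\Phi_{\ell_i}(n,\{\ell_1,\dots,\ell_{i-1}\})$, and bound the edges through each core by passing to the common neighborhood $G[N(A)]$, which is $(K_{r-\ell_i},\{0,\ell_{i+1}-\ell_i,\dots,\ell_s-\ell_i\})$-intersecting. The one point you flag as a potential obstacle---that the link is intersecting \emph{as a graph}, enabling the use of $\Psi$ rather than $\Phi$---is verified exactly as you sketch it, and is stated (with less detail) in the paper's Claim~\ref{claim:containing-A}.
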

  \begin{proof}
  	Let $G$ be an $n$-vertex $(K_r,L)$-intersecting graph and let $\F=\mH_{G}^r$. Applying Theorem~\ref{thm:furedi} to $\F$ we can find a subhypergraph $\F^*\subset \F$ satisfying all of the properties in Theorem~\ref{thm:furedi}. If there is no $A\in \bigcup_{F\in\F^*} \mathcal{I}(F)$ such that $|A|=\ell_i$, then by the definition of $\mathcal{I}(F)$  we see that $\F^*$ is $L\setminus\{\ell_i\}$-intersecting, so $|\F|\le c^{-1}|\F^*|\le c^{-1}\Phi_r(n, L\setminus\{\ell_i\})$ and we are done.
  	
  	Hence we can assume now that for every $F\in\F^*$ there exists some $A\in\mathcal{I}(F)$ of cardinality $\ell_i$ by property (i).
  Let $\A_{F}:=\{A\in\mathcal{I}(F) : |A|=\ell_i\}$, which is nonempty for each $F\in\F^*$.
  	By property (v), it is clear that $\bigcup_{F\in\F^*}\A_{F}$ is an $\{\ell_1,\dots,\ell_{i-1}\}$-intersecting $\ell_i$-graph and hence
  	\begin{equation}\label{eq:l_i-sets}
  		\left| \bigcup_{F\in\F^*}\A_{F}\right| \le \Phi_{\ell_i}(n, \{\ell_1,\dots,\ell_{i-1}\}).
  	\end{equation}
  	For $A\in \bigcup_{F\in\F^*}\A_{F}$, let us denote $\F^*[A]:= \{F\in\F^* : A\subset F \}$ and $\F^*(A):= \{F\setminus A :  F\in\F^*[A]\}$.
  	
  	\begin{claim}\label{claim:containing-A}
  	  For any $A\in\binom{V(G)}{\ell_i}$, $|\F^*[A]|=|\F^*(A)|\le \Psi_{r-\ell_i}(n-\ell_i, \{0,\ell_{i+1}-\ell_i,\dots,\ell_s-\ell_i\})$.
  	\end{claim}
  	\begin{proof}
  	   For $v\in V(G)$, denote by $N_G(v)$ the set of vertices $w\in V(G)$ such that $w$ and $v$ are adjacent. Let $N=\bigcap_{v\in A} N_G(v)$. Since each $F\in \F^*[A]\subset \F=\mH_{G}^r$ induces an $r$-clique in $G$, $F\setminus A$ induces an $(r-\ell_i)$-clique in $G[N]$. So $\F^*(A)\subset \mH_{G[N]}^{r-\ell_i}$.  Note that the induced graph $G[N]$ is a $(K_{r-\ell_i},\{0,\ell_{i+1}-\ell_i,\dots,\ell_s-\ell_i\})$-intersecting graph on at most $n-\ell_i$ vertices. Therefore, 
  \[|\F^*[A]|=|\F^*(A)|\le |\mH_{G[N]}^{r-\ell_i}|\le \Psi_{r-\ell_i}(n-\ell_i, \{0,\ell_{i+1}-\ell_i,\dots,\ell_s-\ell_i\}).\qedhere\]
  	\end{proof}
  	Since any $F\in\F^*$ contains at least one $\ell_i$-set in $\A_{F}$, by Claim~\ref{claim:containing-A} and \eqref{eq:l_i-sets} we get
  	\begin{align*}
  		|\F^*|\le \left|\bigcup_{A\in\cup_{F\in\F^*}\A_{F}} \F^*[A] \right|
  		&\le \left| \bigcup_{F\in\F^*}\A_{F} \right| \cdot \Psi_{r-\ell_i}(n-\ell_i, \{0,\ell_{i+1}-\ell_i,\dots,\ell_s-\ell_i\}) \\
  		&\le \Phi_{\ell_i}(n, \{\ell_1,\dots,\ell_{i-1}\}) \Psi_{r-\ell_i}(n-\ell_i, \{0,\ell_{i+1}-\ell_i,\dots,\ell_s-\ell_i\}).
  	\end{align*}
  	Then the proof of Lemma~\ref{lem:relation} is completed by noting that $|\F|\le c^{-1}|\F^*|$.
  \end{proof}

  We also need the following lemma from the linear algebra method.
\begin{lemma}[{\cite[Ex. 7.3.8]{babai2020linear}}]\label{lem:mod}
	Let $q\ge 2$ be a prime power and let $L\subset [0,q-1]$ be a set of two integers. Suppose $\mathcal{H}$ is an $n$-vertex $r$-graph satisfying
	\begin{enumerate}[(1)]
		\item $r \notin L \pmod{q}$, and
		\item $|A\cap B| \in L \pmod{q}$ for any distinct $A,B\in \mathcal{H}$.
	\end{enumerate}
	Then $|\mathcal{H}|\le \binom{n}{2}$.
\end{lemma}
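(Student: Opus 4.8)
The plan is to prove this by the linear-algebra (polynomial) method, in the spirit of the Ray--Chaudhuri--Wilson and Frankl--Wilson theorems, taking care that $q=p^{k}$ need not be prime. Write $L=\{\mu_1,\mu_2\}$ with $\mu_1<\mu_2$ and attach to each $A\in\mathcal H$ its characteristic vector $v_A\in\{0,1\}^{n}$, so that $\langle v_A,v_B\rangle=|A\cap B|$. The central object is the integer Gram-type matrix $N$ indexed by $\mathcal H\times\mathcal H$ with $N_{A,B}=(|A\cap B|-\mu_1)(|A\cap B|-\mu_2)$. The two hypotheses translate into the single congruence $N\equiv D\,I\pmod q$, where $D:=(r-\mu_1)(r-\mu_2)$: for $A\neq B$ one of the factors vanishes modulo $q$ since $|A\cap B|\equiv\mu_1$ or $\mu_2$, while the diagonal equals $D$. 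The whole argument is an interplay between the rank of $N$ over $\mathbb Q$ and its reduction modulo $p$.

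First I would show $\operatorname{rank}_{\mathbb Q}(N)\le\binom n2$; this is where the uniformity $|A|=r$ is used to sharpen the usual $\binom{n}{\le 2}$ bound. Homogenize by setting $g_A(x)=\bigl(\langle v_A,x\rangle-\tfrac{\mu_1}{r}\langle\mathbf 1,x\rangle\bigr)\bigl(\langle v_A,x\rangle-\tfrac{\mu_2}{r}\langle\mathbf 1,x\rangle\bigr)$, where $\mathbf 1$ is the all-ones vector; this is a homogeneous quadratic with $g_A(v_B)=N_{A,B}$ because $\langle\mathbf 1,v_B\rangle=r$. On the set of weight-$r$ points one has $x_i^2=x_i$ together with $\sum_i x_i=r$ and $\sum_{j\neq i}x_ix_j=(r-1)x_i$; since $r\ge 2$, the last identity expresses each $x_i$ (and hence $1$) as a $\mathbb Q$-combination of the $\binom n2$ products $x_ix_j$. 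Thus every $g_A$, evaluated on $\{v_B\}$, lies in the $\binom n2$-dimensional span of $\{x_ix_j:i<j\}$, so the rows of $N$ span a space of dimension at most $\binom n2$. (The degenerate case $r=1$ is handled directly.)

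The hard part --- and the one place the prime-power hypothesis genuinely bites --- is converting this rank bound into the bound on $|\mathcal H|$. Suppose $m:=|\mathcal H|>\binom n2$. Then $N$ is singular over $\mathbb Q$, so there is a primitive integer kernel vector $c$ (the gcd of whose entries is $1$) with $Nc=0$. Reducing $Nc=0$ modulo $q$ yields $Dc_A\equiv0\pmod q$ for all $A$; if $D$ were a unit this would force $c\equiv0\pmod p$, contradicting primitivity. However, over $\mathbb Z/q\mathbb Z$ the diagonal $D$ may be a zero divisor, because the hypothesis only gives $r\not\equiv\mu_i\pmod q$, not $r\not\equiv\mu_i\pmod p$; the naive argument collapses exactly when $p\mid D$. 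I would fix this by a $p$-adic renormalization. The key elementary fact is that $0<\mu_2-\mu_1<q=p^{k}$, so $b:=v_p(\mu_2-\mu_1)\le k-1$. For $A\neq B$ with $|A\cap B|\equiv\mu_1\pmod q$ we get $v_p(|A\cap B|-\mu_1)\ge k$ and $v_p(|A\cap B|-\mu_2)=b$, whence $v_p(N_{A,B})\ge k+b$; a short comparison of valuations (using $b\le k-1$ and $v_p(r-\mu_i)\le k-1$) gives $v_p(D)\le k+b-1$. Hence $N':=p^{-v_p(D)}N$ is an integer matrix with $N'\equiv u\,I\pmod p$ for a unit $u$ (the prime-to-$p$ part of $D$) and with all off-diagonal entries $\equiv0\pmod p$. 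Now $N'c=0$ reduced modulo $p$ gives $uc_A\equiv0$, so $c\equiv0\pmod p$, again contradicting primitivity; therefore $m\le\binom n2$.

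I expect the rank computation to be routine Ray--Chaudhuri--Wilson-type bookkeeping, so the main obstacle is the prime-power step: one must recover a unit diagonal after dividing out the common $p$-power, and the success of this hinges entirely on the bound $v_p(\mu_2-\mu_1)\le k-1$ coming from $L\subset[0,q-1]$. If instead one tried to work directly over the field $\mathbb F_q$, or to force a unit diagonal by using a higher-degree annihilating polynomial, the degree would exceed $2$ and destroy the sharp $\binom n2$ bound, so the integer-matrix-plus-valuation route seems essential.
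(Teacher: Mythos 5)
The paper offers no proof of this lemma at all --- it is quoted verbatim from Babai--Frankl (Ex.~7.3.8) --- so there is no in-paper argument to compare against; judged on its own, your proof is correct and supplies exactly the intended linear-algebra argument for prime-power moduli. Both key steps check out. The rank bound is sound: for $r\ge 2$ the identity $\sum_{j\neq i}x_ix_j=(r-1)x_i$ on weight-$r$ $0/1$ vectors eliminates the diagonal monomials, which is precisely where $r$-uniformity sharpens the non-uniform Frankl--Wilson bound $\binom{n}{2}+n+1$ to $\binom{n}{2}$. The valuation step, which is the real content of the prime-power case, is also correct: writing $a_i=v_p(r-\mu_i)\le k-1$ and $b=v_p(\mu_2-\mu_1)\le k-1$, the relation $(r-\mu_1)-(r-\mu_2)=\mu_2-\mu_1$ forces $\min(a_1,a_2)\le b$, hence $v_p(D)=a_1+a_2\le b+(k-1)<k+b\le v_p(N_{A,B})$ for all $A\neq B$, so $p^{-v_p(D)}N$ is an integer matrix whose reduction mod $p$ is a unit multiple of the identity; therefore $N$ is nonsingular over $\mathbb{Q}$ and $|\mathcal{H}|=\operatorname{rank}_{\mathbb{Q}}N\le\binom{n}{2}$ (your primitive-kernel-vector phrasing is an equivalent way of saying this). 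The only blemish is the parenthetical claim that $r=1$ is ``handled directly'': for $r=1$ the statement as written is in fact false when $n\le 2$ (take two disjoint singletons with $0\in L$ and $1\notin L \pmod q$), though trivially true for $n\ge 3$; this degeneracy is a defect of the quoted statement rather than of your argument, and it is immaterial here since the paper only invokes the lemma with $r\ge 3$.
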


    When $|L|=3$, Lemma~\ref{lem:mod} can be used to get a bound in a special case:
\begin{lemma}\label{lem:|L|=3}
	 Let $0=\ell_1< \ell_2 <\ell_3 < r$ with $r-\ell_3=\ell_3-\ell_2 \neq\ell_2 $. Then any $n$-vertex $\{\ell_1,\ell_2,\ell_3\}$-intersecting $r$-graph $\mathcal{H}$ satisfies
	 \[
	 |\mathcal{H}|\le \binom{n}{2} =O(n^2).
	 \]
\end{lemma}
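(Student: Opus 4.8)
The plan is to reduce the problem directly to the modular bound of Lemma~\ref{lem:mod}. Since $\ell_1=0$ and $r-\ell_3=\ell_3-\ell_2=:d$, the four numbers satisfy $\ell_2=\ell_3-d$ and $r=\ell_3+d$; in other words $\ell_2,\ell_3,r$ form an arithmetic progression with common difference $d$, while $\ell_1=0$ lies outside it, and the hypothesis $r-\ell_3\neq\ell_2$ reads $\ell_2\neq d$. To invoke Lemma~\ref{lem:mod} I need a prime power $q$ together with a two-element set $L'\subset[0,q-1]$ such that every element of $L=\{0,\ell_2,\ell_3\}$ reduces into $L'\pmod q$ while $r\notin L'\pmod q$; then conditions (1) and (2) of Lemma~\ref{lem:mod} hold for $\mathcal{H}$ and the bound $|\mathcal{H}|\le\binom{n}{2}$ follows immediately.

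The decisive step is the choice of $q$. I would look for a prime power $q$ dividing $\ell_3$ but not dividing $2d$. Granting such a $q$, I reduce modulo $q$: since $\ell_3\equiv 0$ and $\ell_2=\ell_3-d\equiv -d$, the set $\{0,\ell_2,\ell_3\}$ collapses to $L':=\{0,\,(-d)\bmod q\}$. Because $q\nmid 2d$ we have $q\nmid d$, so $-d\not\equiv 0$ and $L'$ genuinely has two elements; moreover $r=\ell_3+d\equiv d\pmod q$, and $d\not\equiv 0$ together with $2d\not\equiv 0$ (again from $q\nmid 2d$) guarantee $d\notin\{0,-d\}=L'$. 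Thus $r\notin L'\pmod q$ and $L'$ is a valid two-element set, exactly as Lemma~\ref{lem:mod} requires.

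The main obstacle is therefore proving that such a prime power $q$ exists, and this is precisely where the hypothesis $\ell_2\neq d$ is used. I would argue by contradiction: if every prime power dividing $\ell_3$ also divides $2d$, then comparing $p$-adic valuations prime by prime yields $\ell_3\mid 2d$. But $\ell_3=\ell_2+d>d$ (as $\ell_2\ge 1$), so the quotient $2d/\ell_3$ is a positive integer strictly less than $2$, forcing $2d=\ell_3$ and hence $\ell_2=d$, contradicting $\ell_2\neq d$. Consequently a prime power $q\mid\ell_3$ with $q\nmid 2d$ must exist.

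Finally, with this $q$ and $L'$ in hand, every pair of distinct edges $A,B\in\mathcal{H}$ has $|A\cap B|\in L$, hence $|A\cap B|\in L'\pmod q$, while $r\notin L'\pmod q$; Lemma~\ref{lem:mod} then gives $|\mathcal{H}|\le\binom{n}{2}=O(n^2)$, completing the proof. I expect the only subtle point to be the existence argument for $q$, since the rest is a routine verification that the residues behave as claimed.
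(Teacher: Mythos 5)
Your proof is correct and takes essentially the same route as the paper's: both reduce to Lemma~\ref{lem:mod} via a prime power $q$ dividing $\ell_3$, and since $q\mid \ell_3=\ell_2+d$ forces $\ell_2\equiv -d\pmod{q}$, your conditions ($q\nmid 2d$, residue set $\{0,-d\}$, $r\equiv d$) are exactly the paper's ($q\nmid 2\ell_2$, residue set $\{0,\ell_2\}$, $r\equiv-\ell_2$) written in terms of the common difference. Your existence argument for $q$ (ruling out $\ell_3\mid 2d$) likewise mirrors the paper's observation that $\ell_3\nmid 2\ell_2$.
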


\begin{proof}
	  Since $\ell_3\neq 2\ell_2$ and $\ell_2< \ell_3$, we have $\ell_3\nmid 2\ell_2$. Then there must exist a prime power $q$ such that $q\mid\ell_3$ and $q\nmid 2\ell_2$. Hence
	 \begin{enumerate}[(i)]
	 	\item $r=2\ell_3-\ell_2 \equiv -\ell_2 \notin \{0,\ell_2\} \pmod{q}$, and
	 	\item $|A\cap B| \in \{0,\ell_2\} \pmod{q}$ for any distinct $A,B\in \mathcal{H}$.
	 \end{enumerate}
	 Then it follows from Lemma~\ref{lem:mod} that $|\mathcal{H}|\leq \binom{n}{2}$.
\end{proof}

    Now we finish the proof of Theorem~\ref{thm:DEF-Turan} (1) using Lemmas~\ref{lem:base}, \ref{lem:relation} and \ref{lem:|L|=3}.
   \begin{proof}[Proof of Theorem~\ref{thm:DEF-Turan} (1)]
   	Since $\ell_1,\ell_2,\dots,\ell_s,r$ do not form an arithmetic progression, then either $r-\ell_s\neq\ell_s-\ell_{s-1}$ or there exists $i\in [s-2]$ such that $ r-\ell_s=\ell_s-\ell_{s-1}=\dots=\ell_{i+2}-\ell_{i+1}\neq \ell_{i+1}-\ell_i$.
   	
   	\textbf{Case 1.} If $r-\ell_s\neq\ell_s-\ell_{s-1}$, then it follows from Lemma~\ref{lem:base} that $\Psi_{r-\ell_{s-1}}(n, \{0,\ell_{s}-\ell_{s-1}\})=o(n^2)$. Then by Lemma~\ref{lem:relation} and Theorem~\ref{thm:DEF78} we have
   	\begin{align*}
   		\Psi_r(n, L) &\le
   		c^{-1}\max\{ \Phi_r(n, L\setminus\{\ell_{s-1}\}), \Phi_{\ell_{s-1}}(n, \{\ell_1,\dots,\ell_{s-2}\}) \Psi_{r-\ell_{s-1}}(n-\ell_{s-1}, \{0,\ell_{s}-\ell_{s-1}\}) \} \\
   		&\le c^{-1}\max\left\lbrace  \prod_{\ell\in L\setminus\{\ell_{s-1}\}}\frac{n-\ell}{r-\ell},  \left( \prod_{\ell\in \{\ell_1,\dots,\ell_{s-2}\}}\frac{n-\ell}{\ell_{s-1}-\ell} \right) \Psi_{r-\ell_{s-1}}(n-\ell_{s-1}, \{0,\ell_{s}-\ell_{s-1}\}) \right\rbrace \\
   		& \le c^{-1}\max\{n^{s-1},n^{s-2}\cdot \Psi_{r-\ell_{s-1}}(n, \{0,\ell_{s}-\ell_{s-1}\})\} \le o(n^s).
   	\end{align*}
   	
   	\textbf{Case 2.} If there is an index $i\in [s-2]$ such that $ r-\ell_s=\ell_s-\ell_{s-1}=\dots=\ell_{i+2}-\ell_{i+1}\neq \ell_{i+1}-\ell_i$, then by Lemma~\ref{lem:|L|=3} we see that
   	\begin{equation}\label{eq:n-choose-2}
   		\Phi_{\ell_{i+3}-\ell_{i}}(n, \{0,\ell_{i+1}-\ell_{i},\ell_{i+2}-\ell_{i}\}) \le \binom{n}{2},
   	\end{equation}
   	where we set $\ell_{s+1}:=r$. One can then combine \eqref{eq:n-choose-2} with the proof of Lemma~\ref{lem:relation} to show that $\Phi_r(n,L)=O(n^{s-1})$ in this case. Since here our goal is to bound $\Psi_r(n,L)$ only, we can directly apply Lemma~\ref{lem:relation} as follows.
   	Applying Lemma~\ref{lem:relation} and Theorem~\ref{thm:DEF78} we obtain
   	\begin{align*}
   		\Psi_r(n, L) &\le
   		c^{-1}\max\{ \Phi_r(n, L\setminus\{\ell_{i}\}), \Phi_{\ell_{i}}(n, \{\ell_1,\dots,\ell_{i-1}\}) \Psi_{r-\ell_{i}}(n-\ell_{i}, \{0,\ell_{i+1}-\ell_{i},\dots,\ell_{s}-\ell_{i}\}) \} \\
   		&\le c^{-1}(n^{s-1}+n^{i-1}\Psi_{r-\ell_{i}}(n, \{0,\ell_{i+1}-\ell_{i},\dots,\ell_{s}-\ell_{i}\})).
   	\end{align*}
   	We claim that $\Psi_{r-\ell_{i}}(n, \{0,\ell_{i+1}-\ell_{i},\dots,\ell_{s}-\ell_{i}\}))=O(n^{s-i})$. Indeed, if $i=s-2$ then we are done by \eqref{eq:n-choose-2}. If $i\le s-3$, letting $L':=\{0,\ell_{i+1}-\ell_{i},\dots,\ell_{s}-\ell_{i}\}$ and using Lemma~\ref{lem:relation}, Theorem~\ref{thm:DEF78} and \eqref{eq:n-choose-2} we have
   	\begin{align*}
   		&\Psi_{r-\ell_{i}}(n, \{0,\ell_{i+1}-\ell_{i},\dots,\ell_{s}-\ell_{i}\})) \\
   		\le & c_i^{-1} \max\{ \Phi_r(n, L'\setminus\{\ell_{i+3}-\ell_{i}\}), \Phi_{\ell_{i+3}-\ell_{i}}(n, \{0,\ell_{i+1}-\ell_{i},\ell_{i+2}-\ell_{i}\}) \\ & ~~~~~~~~~~~~~~~~~~~~~~~~~~~~~~~~~~~~~~~~\cdot\Psi_{r-\ell_{i+3}}(n-(\ell_{i+3}-\ell_{i}), \{0,\ell_{i+4}-\ell_{i+3},\dots,\ell_{s}-\ell_{i+3}\}) \} \\
   		\le & c_i^{-1} \max\{n^{s-i}, \Phi_{\ell_{i+3}-\ell_{i}}(n, \{0,\ell_{i+1}-\ell_{i},\ell_{i+2}-\ell_{i}\})\cdot n^{s-i-2} \} \le O(n^{s-i}),
   	\end{align*}
   	where $c_i$ is a constant depending only on $r-\ell_i$, and the penultimate inequality is by Theorem~\ref{thm:DEF78}. Consequently, in this case we always have $\Psi_r(n, L)=O(n^{s-1})$. This completes the proof of Theorem~\ref{thm:DEF-Turan} (1).
   \end{proof}

\subsection{Proof of Theorem~\ref{thm:DEF-Turan} (2)}\label{subsec:last-case}
 In this subsection, we prove Theorem~\ref{thm:DEF-Turan} (2) by combining the following theorem due to Erd\H{o}s~\cite{erdos1962} and a result (see Claim~\ref{claim:FT}) of Frankl and Tokushige~\cite{frankl2016uniform} on the uniform eventown problem.
 \begin{theorem}[Erd\H{o}s~\cite{erdos1962}]\label{thm:erdos}
 	Let $n \ge t \ge r \ge 2$ be integers. Every $n$-vertex $K_{t+1}$-free graph $G$ satisfies
 	\[
 	N(K_r,G)\le N(K_r, T(n,t)),
 	\]
 	and equality holds if and only if $G$ is isomorphic to the Tur\'an graph $T(n,t)$.
 \end{theorem}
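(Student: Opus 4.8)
The plan is to prove this classical theorem of Erd\H{o}s by Zykov symmetrization, reducing an arbitrary extremal graph to a complete multipartite one and then optimizing the part sizes. For a vertex $v$ of a graph $G$, write $d_r(v)$ for the number of copies of $K_r$ containing $v$. The symmetrization (cloning) operation is defined on a pair of non-adjacent vertices $u,v$: delete all edges at $v$ and join $v$ to exactly the neighbors of $u$, making $v$ a non-adjacent twin of $u$. First I would verify the two structural properties of this operation. (i) It preserves $K_{t+1}$-freeness: any newly created clique must use $v$, but then replacing $v$ by $u$ yields a clique of the same size in $G$ (since $v$'s new neighborhood is $N_G(u)$ and $u\notin N_G(v)$ after cloning), so no $K_{t+1}$ can appear. (ii) Since $u,v$ are non-adjacent, no $K_r$ contains both; the copies of $K_r$ through the old $v$ are replaced by copies through the new twin $v$ of $u$, so the total count changes by exactly $d_r(u)-d_r(v)$. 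Hence cloning the endpoint with the larger $r$-clique degree never decreases $N(K_r,G)$.

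Next, starting from any $K_{t+1}$-free $G$ maximizing $N(K_r,\cdot)$, I would use (i)--(ii) to produce a complete multipartite extremal graph. If non-adjacency fails to be transitive, there are vertices $x,y,z$ with $xy,yz\notin E$ and $xz\in E$; comparing $d_r$-values across the two non-edges and cloning the larger endpoint onto the smaller shows $d_r(x)=d_r(y)=d_r(z)$, for otherwise $N(K_r)$ would strictly increase, contradicting extremality. Repeatedly cloning a maximum-$d_r$ vertex onto its non-neighbors then makes non-adjacency an equivalence relation without decreasing the count, so the resulting extremal graph is complete multipartite; being $K_{t+1}$-free, it has at most $t$ parts.

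It then remains to optimize over complete multipartite graphs. Writing the part sizes as $n_1,\dots,n_p$ with $p\le t$ and $\sum n_i=n$, we have $N(K_r,G)=e_r(n_1,\dots,n_p)$, the $r$-th elementary symmetric polynomial. I would first argue that using all $t$ parts is optimal: while $p<t$, splitting a part of size at least two into two nonempty parts only adds edges and, since $p+1\le t$ with $t\ge r$, strictly increases $e_r$. Then, for fixed $p=t$, a smoothing argument pins down the balanced partition: fixing all but two coordinates $n_i,n_j$ with $n_i+n_j$ constant, one has $e_r=A+(n_i+n_j)\,B+n_in_j\,C$, where $C=e_{r-2}$ of the remaining $t-2$ positive parts, so $C>0$ (using $t-2\ge r-2$), and thus $e_r$ strictly increases as $n_i,n_j$ are balanced. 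Iterating forces all parts to differ by at most one, which is exactly $T(n,t)$, and in particular $T(n,t)$ is the unique maximizer among complete multipartite graphs.

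Finally, to upgrade existence to uniqueness of the overall extremizer, I would fix an extremal $G$ having the maximum number of edges among all $K_{t+1}$-free maximizers of $N(K_r,\cdot)$ and show it is already complete multipartite. Given a bad triple $x,y,z$ with $xy,yz\notin E$, $xz\in E$ and $d_r(x)=d_r(y)=d_r(z)$, cloning the endpoint of larger ordinary degree across a non-edge keeps $N(K_r)$ maximal while converting a non-edge into an edge; when $\deg(x)\neq\deg(y)$ this yields an extremal graph with strictly more edges, contradicting the choice of $G$. Together with the previous paragraph this gives $G\cong T(n,t)$. The step I expect to be the main obstacle is precisely this tie-breaking in the uniqueness argument: when the $r$-clique degrees and the ordinary degrees all coincide, symmetrization raises neither $N(K_r)$ nor the edge count, so ruling out non-complete-multipartite extremizers requires a finer analysis --- for instance, iterating the cloning and showing that the process cannot cycle, or introducing a secondary potential that strictly improves under the operation.
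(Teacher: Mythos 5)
The paper itself offers no proof of this statement: it is Erd\H{o}s's classical theorem (cited as \cite{erdos1962}, essentially Zykov's symmetrization theorem), quoted as a black box and used in the proofs of Theorem~\ref{thm:DEF-Turan}(2) and Theorem~\ref{thm:stab}. So there is no proof in the paper to compare against, and your attempt must stand on its own. Its first two parts essentially do: the cloning computation (your properties (i) and (ii)) is correct, and so is the optimization over complete multipartite graphs via elementary symmetric polynomials. Two details there deserve attention. First, ``repeatedly cloning a maximum-$d_r$ vertex onto its non-neighbors'' needs a termination and bookkeeping argument, because $d_r$-values change after every cloning; the standard fix is to work in rounds (fix one maximum-$d_r$ vertex $u$, clone it onto all of its non-neighbors, noting that $d_r(u)$ is unchanged during the round while the $d_r$ of the remaining non-neighbors of $u$ cannot increase, then recurse inside $N(u)$), so that each round completes one part. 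Second, strictness of your splitting step requires $e_{r-2}$ of the other parts to be nonzero, i.e.\ $p\ge r$; this follows because a maximizer contains at least one $K_r$, not from your stated reason $p+1\le t$.

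The genuine gap is the uniqueness argument, and it is worse than the tie-breaking problem you flag. Your plan is to take an extremal graph with the \emph{maximum number of edges} among extremal graphs and show that this particular graph is complete multipartite. Even if that were carried out, it is logically insufficient: uniqueness requires that \emph{every} maximizer of $N(K_r,\cdot)$ be $T(n,t)$, and your argument says nothing about a hypothetical extremal graph that is not edge-maximal among extremal graphs. Combined with the unresolved case $\deg(x)=\deg(y)$, the equality characterization is simply not proved. A completion using only your own tools runs as follows. Let $G$ be any extremal graph, and let $G^*\subseteq G$ be obtained by deleting all edges lying in no copy of $K_r$; no copy of $K_r$ is destroyed, so $G^*$ is still extremal, still $K_{t+1}$-free, and every edge of $G^*$ lies in some $K_r$. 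If $G^*$ had a bad triple $x,y,z$ (with $xy,yz\notin E$, $xz\in E$), then after forcing $d_r(x)=d_r(y)=d_r(z)$ as you did, clone $y$ onto $x$ (count change $0$) and then, in the resulting graph, clone $y$ onto $z$: the change in the second step equals the number of copies of $K_r$ in $G^*$ containing the edge $xz$, because $d_r(z)$ dropped by exactly this number when $x$ lost its edges while $d_r(y)$ was unaffected. Extremality forces this number to be $0$, contradicting the defining property of $G^*$. Hence $G^*$ has no bad triple, i.e.\ it is complete multipartite, and your optimization step gives $G^*\cong T(n,t)$. Finally, since $n\ge t$ makes every part of $T(n,t)$ nonempty, adding any edge to $T(n,t)$ creates a $K_{t+1}$; as $G\supseteq G^*$ is $K_{t+1}$-free, this yields $G=G^*\cong T(n,t)$, which is the uniqueness you need.
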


\begin{proof}[Proof of Theorem~\ref{thm:DEF-Turan} (2)]
	Suppose the sequence $\ell_1,\ell_2,\dots,\ell_s,r$ is an arithmetic progression and let $d>0$ be the common difference. Then $L=\{\ell_1,\ell_1+d,\dots,\ell_1+(s-1)d\}$ and  $r=\ell_1+sd$. Note that the case $d=1$ already follows from Theorem~\ref{thm:Helliar-Liu-EKR-Turan}, so assume now that $d\ge 2$. Write $n-\ell_1=md+\lambda$, where $\lambda$ is the unique integer in $[0,d-1]$ such that $d\mid (n-\ell_1-\lambda)$.

	Let us first give the extremal construction which is an ``atomic construction'' similar to the eventown theorem. Let $T(m,s)$ be the Tur\'an graph, and let $\hat{T}(m,s,d)$ be the blow-up of $T(m,s)$ by replacing each vertex in $T(m,s)$ by a clique $K_d$.
 That is, \[
	 \hat{T}(m,s,d):=  \underbrace{\lfloor m/s\rfloor K_d +\dots+ \lfloor m/s\rfloor K_d}_{s-s_1}
	  + \underbrace{\lceil m/s\rceil K_d +\dots+ \lceil m/s\rceil K_d}_{s_1},
	\]
where $s_1\in [0,s-1]$ is the integer satisfying $s\mid (m-s_1)$ and $bK_d$ denotes the  vertex-disjoint union of $b$  copies of $K_{d}$'s.
Finally, let \begin{equation}\label{gnrl}
               G_{n,r,L}:=K_{\ell_1} + \hat{T}(m,s,d).
             \end{equation}
Since $T(m,s)$ is $K_{s+1}$-free, $\hat{T}(m,s,d)$ is $K_{sd+1}$-free. Recall $r=sd+\ell_1$. Then every copy of $K_{r}$ in $G_{n,r,L}$ contains the fixed $K_{\ell_1}$ in its definition. So

 \[
	 N(K_{r}, G_{n,r,L})=N(K_{r-\ell_1}, \hat{T}(m,s,d))=N(K_{sd}, \hat{T}(m,s,d))=N(K_{s}, T(m,s)).
	 \]
Further, it is clear that $G_{n,r,L}$ is a $(K_r,L)$-intersecting graph with $\ell_1+md=n-\lambda$ vertices. Thus \[
	 \Psi_r(n,L)\ge N(K_{s}, T(m,s)) = N(K_{s}, T(\lfloor(n-\ell_1)/d\rfloor,s)) = (1+o(1))\left(\frac{n-\ell_1}{r-\ell_1} \right)^{s}.
	 \]

	
	 It remains to show that $\Psi_{r}(n,L)\le N(K_{s}, T(m,s))$ for large enough $n$. Since $\Psi_r(n,L)\ge (1+o(1))\left(\frac{n-\ell_1}{r-\ell_1} \right)^{s}$, by the second part of Theorem~\ref{thm:DEF78}, any largest associated $L$-intersecting $r$-graph satisfies that all $r$-edges have a common $\ell_1$-subset. So it suffices to show that $\Psi_{r-\ell_1}(n-\ell_1,L_{d,s}\})\le N(K_{s}, T(m,s))$ for large $n$, where $L_{d,s}:=\{0,d,2d,\dots,(s-1)d\}$.
	
 	 Let $G$ be an $(n-\ell_1)$-vertex $(K_{r-\ell_1},L_{d,s})$-intersecting graph with $N(K_{r-\ell_1}, G)=\Psi_{r-\ell_1}(n-\ell_1,L_{d,s})\ge N(K_{s}, T(m,s))$. Note that $r-\ell_1=sd$.
 	 Similar to the proof of Lemma~\ref{lem:base}, we will consider a collection of sets with properties in Claim~\ref{claim:S_C} (b) and (c). However, here such a collection will come from \textit{atoms} instead of cores of sunflowers.
 	 As in \cite{frankl2016uniform}, we say that a subset $S\subset V(G)$ with $|S|\ge d$ is an \textit{atom} if $S$ is inclusion maximal with the property that either $S\subset A$ or $S\cap A=\emptyset$ for any $A\in \mH_{G}^{r-\ell_1}=\mH_{G}^{sd}$. It is easy to see that atoms are pairwise disjoint, since the union of two non-disjoint sets satisfying the above property also satisfies this property. Let $\mathcal{S}\subset \binom{V(G)}{d}$ be the set consisting of all atoms of size $d$ and let $X_1:=\bigcup_{S\in \mathcal{S}} S\subset V(G)$ be the disjoint union of atoms in $\mathcal{S}$.

   Note that each atom $S\in \mathcal{S}$ plays a role that it is either included or excluded completely by an $sd$-clique in $G$. So we can view each $S$ as a whole to construct an auxiliary graph, say $H'$, whose  vertex set is $V(H')=\mathcal{S}$ and edge set is \[E(H')=\left\lbrace \{S_1,S_2\}\in\binom{\mathcal{S}}{2}:S_1\cup S_2\subset A \text{ for some } A\in \mH_{G}^{sd} \right\rbrace . \]
 	 By the definition of atoms, it is easy to see that $N(K_{sd}, G[X_1])=N(K_s, H')$.
 	 \begin{claim}\label{claim:K_{s+1}-free}
 	 	The graph $H'$ is $K_{s+1}$-free, and hence $N(K_{sd}, G[X_1])=N(K_s, H')\le N(K_s, T(|\mathcal{S}|,s))$.
 	 \end{claim}
 	 \begin{proof}
 	 	If there exists a copy of $K_{s+1}$ in $H'$, then we can find a copy of $K_{(s+1)d}$ in $G$ as vertices of $H'$ correspond to pairwise disjoint copies of $K_d$ in $G$. However, it is easy to see that there exist two copies of $K_{sd}$ in every copy of $K_{(s+1)d}$ such that the intersection size of their vertex sets is $sd-1$, contradicting to the assumption that $G$ is $(K_{sd},L_{d,s})$-intersecting. Thus we conclude that $H'$ is $K_{s+1}$-free. Then it follows from Theorem~\ref{thm:erdos} that $N(K_s, H')\le N(K_s, T(|\mathcal{S}|,s))$.
 	 \end{proof}
 	
 	Let $X_0:=V(G)\setminus X_1$. 
 The following claim shows that for every $x\in X_0$, the number of $sd$-cliques in $G$ containing $x$ is very small. The claim has been proved by Frankl and Tokushige~\cite{frankl2016uniform} in the setting of set systems. For the reader's convenience, we include its proof sketch below.

 	
 	 \begin{claim}[{\cite[Claim 5]{frankl2016uniform}}]\label{claim:FT}
   For any $x\in X_0$, we have $|\{A: x\in A\in \mH_{G}^{sd}\}| =O(n^{s-2})$.
 	 \end{claim}
 	
 	 \begin{proof}[Proof sketch]
 	 Note that $\mH_{G}^{sd}(x):=\{A\setminus\{x\}:A\in \mH_{G}^{sd}, x\in A\}$ is $\{d-1,2d-1,\dots,(s-1)d-1\}$-intersecting. If $I:=\bigcap_{B\in \mH_{G}^{sd}(x)} B$ has size at most $d-2$, then we are done by the second part of Theorem~\ref{thm:DEF78}. Moreover, if $|I|\ge d$, then $\mH_{G}^{sd}(x)$ is now $\{2d-1,\dots,(s-1)d-1\}$-intersecting and we are also done by Theorem~\ref{thm:DEF78}. Hence assume $|I|=d-1$. Observe that $\{x\}\cup I$ is not contained in an atom since any atom containing $\{x\}\cup I$ must be $\{x\}\cup I$ itself and $x\notin X_1$. Thus, there exists $D\in \mH_{G}^{sd}$ such that $x\notin D$ and $I\cap D\neq\emptyset$. Clearly, any $B\in \mH_{G}^{sd}(x)$ intersects $D\setminus I$. Hence $|\mH_{G}^{sd}(x)|\le \sum_{y\in D\setminus I} |\{B\in \mH_{G}^{sd}(x):I\cup\{y\}\subset B\}|\le O(n^{s-2})$ by Theorem~\ref{thm:DEF78}.
 	\end{proof}


 	 Recall that $|V(G)|=n-\ell_1=md+\lambda$.
     Using the above two claims we can show that $|X_0|=\lambda$ and $|X_1|=md$.
 	 \begin{claim}\label{claim:X_0=lambda}
 	 	For sufficiently large $n$, we have $|X_0|=\lambda$ and $|X_1|=md$. 
 	 \end{claim}
 	 \begin{proof}
 	 Note that any $A\in \mH_{G}^{sd}\setminus \mH_{G[X_1]}^{sd}$ contains some vertex $x\in X_0$. Applying Claims~\ref{claim:K_{s+1}-free} and \ref{claim:FT} we then have
 	 \begin{align*}
 	 	|\mH_{G}^{sd}| &= |\mH_{G}^{sd}\setminus \mH_{G[X_1]}^{sd}| + |\mH_{G[X_1]}^{sd}| \\
 	 	&\le \sum_{x\in X_0}|\mH_{G}^{sd}(x)| + N(K_{sd}, G[X_1]) \\
 	 	&\le |X_0| \cdot O(n^{s-2}) + N(K_s, T(|\mathcal{S}|,s)).
 	 \end{align*}
 	 	
 	 Since $d\mid |X_1|$ and $n-\ell_1=md+\lambda$, we have $|X_0|=n-\ell_1-|X_1| \equiv \lambda \pmod{d}$. We prove by contradiction. Assume $|X_0|\neq \lambda$. Then $|X_0|\ge d+\lambda$ and hence $|\mathcal{S}|=(n-\ell_1-|X_0|)/d\le m-1$. Let $m'\equiv m\pmod s$ and let $m''\equiv m-1 \pmod s$ where $m',m''\in [0,s-1]$. Then for sufficiently large $n$ we have
 	 \begin{align*}
 	 	&N(K_s, T(m,s)) - N(K_s, T(|\mathcal{S}|,s)) \\
 	 	\ge & N(K_s, T(m,s)) - N(K_s, T(m-1,s))\\
 	 	= & \left(\frac{m-m'}{s} \right)^{s-m'} \left(\frac{m+s-m'}{s} \right)^{m'}- \left(\frac{m-1-m''}{s} \right)^{s-m''} \left(\frac{m-1+s-m''}{s} \right)^{m''}\\
 	 	= & s^{-s}( -m'(s-m')+(s-m')m' + (1+m'')(s-m'')+(1-s+m'')m'')m^{s-1} - O(m^{s-2})\\
 	 	= & (sd)^{-s+1} n^{s-1} - O(n^{s-2})
 	 	=  \Omega(n^{s-1}) > |X_0| \cdot O(n^{s-2}),
 	 \end{align*}
 	 where the last inequality holds when $|X_0|< cn$ for a sufficiently small constant $c\in (0,1)$ (depending only on $d,s$).
 	 On the other hand, when $|X_0|\ge cn$ and $n$ is sufficiently large,
 	 \begin{align*}
 	 	&N(K_s, T(m,s)) - N(K_s, T(|\mathcal{S}|,s)) \\
 	 	\ge & N(K_s, T(m,s)) - N(K_s, T(\lceil((1-c)n-\ell_1)/d\rceil,s))\\
 	 	= & (1+o(1))\left(\frac{n-\ell_1-\lambda}{sd} \right)^s-(1+o(1))\left(\frac{(1-c)n-\ell_1}{sd} \right)^s\\
 	 	=&\Omega(n^s) > |X_0| \cdot O(n^{s-2}).
 	 \end{align*}
 	 Thus, we have
 	 	\[
 	 	|\mH_{G}^{sd}|\le |X_0| \cdot O(n^{s-2}) + N(K_s, T(|\mathcal{S}|,s)) < N(K_{s}, T(m,s)), 
 	 	\]
 	 contradicting to the assumption that $G$ satisfies $N(K_{sd}, G)\ge N(K_{s}, T(m,s))$.
 	 	
 	 Therefore, when $n$ is large enough, we get $|X_0|=\lambda<d$ and $|X_1|=md$.
 	 \end{proof}

 Next, we claim that in fact for each $x\in X_0$, $|\{A: x\in A\in \mH_{G}^{sd}\}| =0$. This is true since  each $A\in\mH_{G}^{sd}$ satisfies $|A\cap X_0|=sd-|A\cap X_1|\equiv 0\pmod{d}$, which is $0$ by $|X_0|=\lambda<d$ and definitions of atoms.  Thus $\mH_{G}^{sd}\subset \mH_{G[X_1]}^{sd}$ and  $N(K_{sd}, G)= N(K_{sd}, G[X_1])$.

 	 Now we finish the proof of Theorem~\ref{thm:DEF-Turan} (2).
 	 Combining Claims~\ref{claim:K_{s+1}-free} and \ref{claim:X_0=lambda}, we have $|\mathcal{S}|=m$ and
 	 \[
 	  \Psi_{r-\ell_1}(n-\ell_1,L_{d,s}\})
 	  =  N(K_{sd}, G)
 	  = N(K_{sd}, G[X_1])
 	  \le N(K_s, T(|\mathcal{S}|,s))=N(K_{s}, T(m,s)),
 	 \]
 	 whenever $n$ is sufficiently large.
 	 Consequently, $\Psi_{r}(n,L)= N(K_{s}, T(m,s))$ for large enough $n$, and it follows directly from the proof that $G_{n,r,L}$ is the unique extremal structure.
\end{proof}

     Let us conclude this section with a discussion of a problem proposed by Helliar and Liu~\cite{helliar2024generalized}.
     \begin{problem}[\cite{helliar2024generalized}]
        Characterize the family of sets $L \subset [0,r-1]$ such that
        \begin{equation}\label{eq-lim=0}
        \lim_{n\to\infty} \frac{\Psi_r(n,L)}{\Phi_r(n,L)} = 0.
        \end{equation}
     \end{problem}
     \noindent Denote by $\mathcal{L}_r$ the family of sets $L \subset [0,r-1]$ such that \eqref{eq-lim=0} holds.
     For $L=[0,t]$ with $t\in [r-2]$, a seminal result of R\"odl~\cite{Rodl} says $\Phi_r(n,L)\ge (1-o(1))\binom{n}{t+1}/\binom{r}{t+1}=\Omega(n^{|L|})$, while \eqref{eq-Gowers-Janzer} shows $\Psi_r(n,L)=o(n^{t+1})=o(n^{|L|})$; hence $[0,t]\in \mathcal{L}_r$ for any $t\in [r-2]$.
     More generally, if the sequence $\ell_1<\dots<\ell_{s}<r$ is not an arithmetic progression for some $s\ge 2$, then Theorem~\ref{thm:DEF-Turan} (1) indicates that $\{\ell_1,\dots,\ell_{s}\} \in \mathcal{L}_r$ whenever $\Phi_r(n,\{\ell_1,\dots,\ell_{s}\})=\Omega(n^{s})$.  For instance, it follows straightforwardly from R\"odl's result that $\Phi_{\ell+kd}(n,\{\ell,\ell+d,\dots,\ell+(s-1)d\})=\Omega(n^{s})$ for any $\ell\ge 0, d\ge 1$ and $k>s$. As another example, it was proven by Tokushige~\cite{Tokushige2006} that $\Phi_{12}(n,\{0,1,2,3,4,6\})=\Omega(n^{6})$ (see \cite{Tokushige2006} for more such examples). On the other hand, Theorem~\ref{thm:DEF-Turan} (2) shows that $\{\ell_1,\dots,\ell_{s}\} \notin \mathcal{L}_r$ when the sequence $\ell_1<\dots<\ell_{s}<r$ is an arithmetic progression.

\section{Hilton--Milner-type stability for $(K_r,t)$-intersecting graphs}\label{sec:hm}
     In this section, we prove Theorem~\ref{thm:stab} by using the method in \cite{frankl1978intersecting} and some new ideas. The main difference from \cite{frankl1978intersecting} is that we need to rule out one of the extremal cases corresponding to \eqref{eq:1st-extremal}. We will also make use of the following result which is a special case of \cite[Theorem 14]{gerbner2019generalized}.
     \begin{theorem}[\cite{gerbner2019generalized}]\label{thm:GMV}
     	If $G$ is an $n$-vertex $(K_{s+1},1)$-intersecting graph, then \[N(K_s,G)\le (1+o(1))\left(\frac{n}{s} \right) ^s.\]
     \end{theorem}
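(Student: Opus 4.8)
The plan is to split on whether $G$ contains any $(s+1)$-clique, reducing everything to the $K_{s+1}$-free situation where Erd\H{o}s' theorem (Theorem~\ref{thm:erdos}) applies directly. If $G$ is already $K_{s+1}$-free, then the $(K_{s+1},1)$-intersecting hypothesis is vacuous and Theorem~\ref{thm:erdos} immediately gives $N(K_s,G)\le N(K_s,T(n,s))=(1+o(1))(n/s)^s$, as desired.

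The substantive case is when $G$ does contain an $(s+1)$-clique. The key observation I would exploit is that, for a $1$-intersecting family, a single member already serves as a transversal for the whole family. Concretely, fix any $(s+1)$-clique $K_0$ with vertex set $V(K_0)=\{v_0,\dots,v_s\}$. Since $\mH_G^{s+1}$ is $1$-intersecting, every $(s+1)$-clique of $G$ meets $V(K_0)$ in at least one vertex; hence deleting these $s+1$ vertices yields a graph $G':=G-V(K_0)$ on $n-s-1$ vertices that is $K_{s+1}$-free. I would then partition the $s$-cliques of $G$ according to whether they lie inside $G'$. Those contained in $G'$ number at most $N(K_s,T(n-s-1,s))=(1+o(1))(n/s)^s$ by Theorem~\ref{thm:erdos}, while every remaining $s$-clique meets $V(K_0)$ and therefore contains some $v_i$, so the number of such cliques is bounded by $\sum_{i=0}^{s}\binom{d_G(v_i)}{s-1}\le (s+1)\binom{n-1}{s-1}=O(n^{s-1})$. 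Summing the two contributions gives $N(K_s,G)\le(1+o(1))(n/s)^s+O(n^{s-1})=(1+o(1))(n/s)^s$.

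The only genuine idea is the transversal observation in the second case; once a bounded set of $s+1$ vertices is removed to destroy all $(s+1)$-cliques, the rest is routine bookkeeping combining Erd\H{o}s' theorem with the trivial estimate $O(n^{s-1})$ for $s$-cliques through a fixed vertex. I therefore do not anticipate a serious obstacle: the argument is short and self-contained, and in particular does not need the full generality of \cite[Theorem~14]{gerbner2019generalized}. The only point requiring a little care is confirming that the deletion of $O_s(1)$ vertices perturbs the Tur\'an count $N(K_s,T(n-s-1,s))$ only within the claimed $(1+o(1))$ factor, which follows since $(n-s-1)/n\to 1$.
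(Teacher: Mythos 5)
Your argument is correct, and it is worth pointing out that it takes a genuinely different route from the paper for the simple reason that the paper contains no proof of this statement at all: Theorem~\ref{thm:GMV} is imported as a special case of Theorem 14 of Gerbner--Methuku--Vizer~\cite{gerbner2019generalized}, whose setting is graphs forbidding vertex-disjoint copies of a given graph (note that $(K_{s+1},1)$-intersecting is exactly the condition of having no two vertex-disjoint copies of $K_{s+1}$). Your proof is self-contained and checks out: if $G$ is $K_{s+1}$-free, Theorem~\ref{thm:erdos} gives the bound outright; otherwise, the $1$-intersecting hypothesis makes the vertex set of any single $(s+1)$-clique $K_0$ a transversal of all $(s+1)$-cliques, so $G-V(K_0)$ is $K_{s+1}$-free, and the $s$-cliques of $G$ split into those inside $G-V(K_0)$, at most $N(K_s,T(n-s-1,s))=(1+o(1))\left(\frac{n}{s}\right)^s$ by Theorem~\ref{thm:erdos}, and those through one of the $s+1$ deleted vertices, at most $(s+1)\binom{n-1}{s-1}=O(n^{s-1})$ since an $s$-clique through a fixed vertex $v$ is determined by an $(s-1)$-subset of $N_G(v)$; the sum is $(1+o(1))\left(\frac{n}{s}\right)^s$ as claimed. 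What your approach buys is that the paper could dispense with the external citation here, since only this asymptotic special case is used in the proof of Theorem~\ref{thm:stab}; what the citation buys is brevity and access to the stronger, more general statement of~\cite{gerbner2019generalized} (arbitrary numbers of disjoint copies and more refined bounds), none of which is actually needed in this paper.
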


     \begin{proof}[Proof of Theorem~\ref{thm:stab}]
     Let $G$ be an $n$-vertex non-trivial $(K_r,t)$-intersecting graph with $N(K_r, G)$ maximized. Then $N(K_r, G) \ge N(K_r,  K_{t+2} + T(n-t-2, r-t-1))$.  For $j\in [t,r]$, let
     \[
     \T_j := \left\lbrace T\in \binom{V(G)}{j} : T\subset A \text{ for some } A\in \mH_G^r, \text{ and } |T\cap B|\ge t \text{ for any } B\in \mH_G^r \right\rbrace.
     \]
     Note that $\T_t=\emptyset$ since $\mH_G^r$ is non-trivial $t$-intersecting. For $j\in [t+1,r]$, define
     \[
     \T_j' := \left\lbrace T\in \T_j : T \text{ is inclusion minimal in } \bigcup_{k\ge t} \T_k, \text{ i.e., there is no } S\in \bigcup_{t\le k<j} \T_k \text{ such that } S\subset T \right\rbrace.
     \]
      Since $A\in \T_r$ for any $A\in \mH_G^r$, we see that each $A\in \mH_G^r$ contains some member of $\bigcup_{j>t} \T_j'$ by taking a minimal subset of $A$ in $\bigcup_{k>t} \T_k$. Thus, $\mH_G^r\subset \bigcup_{j>t} \bigcup_{T\in \T_j'} \{A\in \mH_G^r : T\subset A \}$ and hence
     \begin{equation}\label{eq:count-by-base0}
     	|\mH_G^r| \le \sum_{j>t} \sum_{T\in \T_j'} |\{A\in \mH_G^r : T\subset A \}|.
     \end{equation}
      We will show that $|\mH_G^r|$ and $\sum_{T\in \T_{t+1}'} |\{A\in \mH_G^r : T\subset A \}|$ differ only by a small error term.
     It is not difficult to verify the following claim by noting that the core of such an $(r-t+2)$-sunflower also belongs to $\bigcup_{k>t} \T_k$ and using the minimality in the definition of $\T_j'$.
     \begin{claim}[{\cite[Lemma 1]{frankl1978intersecting}}]
     	For any $j>t$, the family $\T_j'$ contains no $(r-t+2)$-sunflower.
     \end{claim}

     The well-known Erd\H{o}s--Rado sunflower lemma~\cite{erdos1960intersection} then implies that $|\T_j'|$ is bounded by a function depending only on $r$. Hence $\sum_{T\in \bigcup_{j\ge t+2}\T_j'} |\{A\in \mH_G^r : T\subset A \}|\le |\bigcup_{j\ge t+2}\T_j'|\cdot n^{r-t-2} = O(n^{r-t-2})$.
     Define
     \[
     \T_{t+1}'' := \{T\in \T_{t+1}' : T \text{ is contained in more than } rn^{r-t-2} \text{ edges of } \mH_G^r\}.
     \]
     It follows that $\sum_{T\in \T_{t+1}'\setminus\T_{t+1}''} |\{A\in \mH_G^r : T\subset A \}|\le |\T_{t+1}'\setminus\T_{t+1}''|\cdot rn^{r-t-2}=O(n^{r-t-2})$.
     Combining this with \eqref{eq:count-by-base0}, we obtain
     \begin{equation}\label{eq:count-by-base}
     	|\mH_G^r| \le \sum_{j>t} \sum_{T\in \T_j'} |\{A\in \mH_G^r : T\subset A \}|
     	\le \sum_{T\in \T_{t+1}''} |\{A\in \mH_G^r : T\subset A \}| +O(n^{r-t-2}).
     \end{equation}
     The following claim gives a suitable bound on $|\{A\in \mH_G^r : T\subset A \}|$ for each $T\in \T_{t+1}''$.

     \begin{claim}\label{claim:containing-t+1}
     	Let $n$ be sufficiently large. For any subset $T\subset V(G)$ of size $t+1$ which induces a $(t+1)$-clique in $G$ (or simply $T\in \T_{t+1}''$), we have \[
     	|\{A\in \mH_G^r : T\subset A \}| \le  (1+o(1))\left(\frac{n-t-1}{r-t-1} \right)^{r-t-1}. \]
     \end{claim}
     \begin{proof}
     	We first show that $G[N]$ is $(K_{r-t},1)$-intersecting, where $N:=\bigcap_{v\in T} N_G(v)\subset V(G)\setminus T$.
     	Indeed, for any two different $(r-t)$-sets $A,B\in \mH_{G[N]}^{r-t}$ and any different $x,y\in T$ we have $A\cup T\setminus\{x\}, B\cup T\setminus\{y\} \in\mH_{G}^{r}$, whose intersection has size $|A\cap B|+t-1$, so $|A\cap B|\ge 1$.
     	Thus, $G[N]$ is $(K_{r-t},1)$-intersecting. It follows from Theorem~\ref{thm:GMV} that
     	\[
     	|\{A\in \mH_G^r : T\subset A \}|=|N(K_{r-t-1},G[N])| \le  (1+o(1))\left(\frac{n-t-1}{r-t-1} \right)^{r-t-1}. \qedhere
     	\]
     \end{proof}

      Combining \eqref{eq:count-by-base} with Claim~\ref{claim:containing-t+1}, we see that
      \begin{equation}\label{eq:count-base-2}
      	|\mH_G^r| \le (1+o(1))\left(\frac{n-t-1}{r-t-1} \right)^{r-t-1} |\T_{t+1}''|+O(n^{r-t-2}).
      \end{equation}

     \begin{claim}
      	When $n$ is large, we have $\T_{t+1}''=\binom{D}{t+1}$ for some $(t+2)$-set $D\subset V(G)$.
     \end{claim}
     \begin{proof}
     	Recall that by the maximality of $N(K_r, G)$ we have
     	\begin{equation}\label{eq:max-assumption3}
     		N(K_r, G) \ge N(K_r,  K_{t+2} + T(n-t-2, r-t-1))= (1+o(1))(t+2)\left(\frac{n-t-2}{r-t-1} \right)^{r-t-1}.
     	\end{equation}
     	Combining this with \eqref{eq:count-base-2} gives $|\T_{t+1}''|\ge t+2\ge 3$. Let $T_1,T_2,T_3\in \T_{t+1}''$ be three different sets in $\T_{t+1}''$.
     	
     	Next, we show that $\T_{t+1}''$ is $t$-intersecting. For any two different $T,T'\in \T_{t+1}''$, if $|T\cap T'|<t$, then any set in $\{A\in\mH_G^r:T\subset A\}$ intersects $T'\setminus T$ and hence \[|\{A\in\mH_G^r:T\subset A\}|\le\sum_{x\in T'\setminus T} |\{A\in\mH_G^r:T\cup\{x\}\subset A\}|\le r\binom{n-t-2}{r-t-2}\le rn^{r-t-2},
     	\] which contradicts to the definition of $\T_{t+1}''$. Thus we see that $\T_{t+1}''$ is $t$-intersecting.
     	
        Since $\T_{t+1}''$ is $t$-intersecting, $|T_1\cap T_2|=|T_2\cap T_3|=|T_1\cap T_3|=t$. Let us show $T_1\cap T_2\neq T_1\cap T_3$ by contradiction. Assume on the contrary that $T_1\cap T_2=T_1\cap T_3 \triangleq C$, and write $T_i=\{x_i\}\cup C$ for $i\in [3]$. Then for all $T\in\T_{t+1}''$ we have $C\subset T$, since otherwise $|T\cap C|=t-1$ and $x_1,x_2,x_3\in T\setminus C$ which would imply $|T|=|T\setminus C|+|T\cap C|\ge t+2$. Thus in this case $\T_{t+1}''$ is a sunflower with core $C$. Note that there exists some $A_0\in \mH_G^r$ such that $C\not\subset A_0$ as $\mH_G^r$ is non-trivial $t$-intersecting. Furthermore, the definition of $\T_{t+1}''$ implies that $|A_0\cap T|\ge t$ for any $T\in \T_{t+1}''$; hence $|A_0\cap C|=t-1$ and $P\triangleq(\bigcup_{T\in \T_{t+1}''} T)\setminus C \subset A_0$.
        
        Let $M:=V(G)\setminus(C\cup A_0)$, and for $x\in A_0\setminus C$ let \[
        \mH_x:=\{B\subset \mH_{G[M]}^{r-t-1}:\{x\}\cup C\cup B\in \mH_G^r\} \subset \mH_{G[N_0]}^{r-t-1},
        \]
        where $N_0:=M\cap (\bigcap_{u\in C} N_G(u))$. See \autoref{fig1} for a visual diagram of these set notations.

        \begin{center}
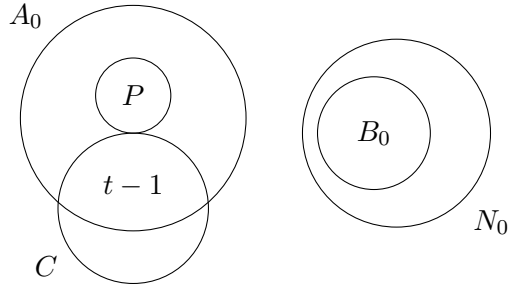

        \begin{tikzpicture}
        	
        	\node [draw,
        	circle,
        	minimum size =3cm,
        	label={135:$A_0$}] (A) at (0,1.2){};
        	
        	\node [draw,
        	circle,
        	minimum size =1cm
        ]
            (T) at (0,1.5){};
        	
        	\node [draw,
        	circle,
        	minimum size =2cm,
        	label={210:$C$}] (C) at (0,0){};
        	
        	\node [draw,
        	circle,
        	minimum size =2.5cm,
        	label={315:$N_0$}] (N) at (3.5,1){};
        	
        	\node [draw,
        	circle,
        	minimum size =1.5cm,
        	] (B) at (3.2,1){};
        	
        	\node at (0,0.3) {$t-1$};
        	
        	\node at (T.center){$P$};
        	
        	\node at (B.center){$B_0$};
        \end{tikzpicture} \end{center} \captionof{figure}{A visual diagram of set notations.}\label{fig1}

        \noindent We claim that $\mH_x\cap\mH_{x'}=\emptyset$ for any different $x,x'\in A_0\setminus C$. Indeed, if there exists $B_0\in \mH_x\cap\mH_{x'}$, then $\{x,x'\}\cup (C\setminus \{v\})\cup B_0 \in \mH_G^r$ for any $v\in C$. However, since $|\T_{t+1}''|\ge 3$, we can find some $y\in P\setminus\{x,x'\}$; hence we get $\{y\}\cup C\in \T_{t+1}''$ and 
        \[
        |(\{y\}\cup C)\cap (\{x,x'\}\cup (C\setminus \{v\})\cup B_0)|=|C\setminus\{v\}|<t,
        \] contradicting to the definition of $\T_{t+1}''$. This shows that $\mH_x\cap\mH_{x'}=\emptyset$ for any different $x,x'\in A_0\setminus C$. 
        
        Moreover, observe that $G[N_0]$ is $K_{r-t}$-free, since otherwise the union of $C$ and a copy of $K_{r-t}$ in $G[N_0]$ would give an $r$-clique disjoint with $A_0\setminus C$ which leads to a contradiction as $|A_0\cap C|=t-1$. Thus, $\sum_{x\in A_0\setminus C} |\mH_x|\le|\mH_{G[N_0]}^{r-t-1}|\le N(K_{r-t-1},T(|N_0|,r-t-1))$ by Theorem~\ref{thm:erdos}. By the definition of $\T_{t+1}''$, we have $|A\cap T|\ge t$ for any $A\in\mH_G^r$ and any $T\in \T_{t+1}''$, so
        \begin{align*}
        	|\mH_G^r|\le &|\{A\in \mH_G^r: C\subset A \text{ and } A\cap (A_0\setminus C)\neq\emptyset\}| +|\{A\in \mH_G^r: |A\cap C|=t-1 \text{ and } P \subset A \}|\\
        	\le &\sum_{x\in A_0\setminus C} |\mH_x| + O(n^{r-t-2})
        	\le N(K_{r-t-1},T(|N_0|,r-t-1))+ O(n^{r-t-2}),
        \end{align*}
        contradicting to \eqref{eq:max-assumption3} for large $n$. Therefore, $T_1\cap T_2\neq T_1\cap T_3$.

        Write $T_1\setminus T_2=\{y_1\}$ and $T_2\setminus T_1=\{y_2\}$, and let $D=T_1\cup T_2$. Clearly, $|D|=t+2$. Since $\T_{t+1}''$ is $t$-intersecting and $T_1\cap T_2\neq T_1\cap T_3$, we have $y_1,y_2\in T_3$, $|T_1\cap T_2\cap T_3|=t-1$ and $T_3\subset D$. Furthermore, since for $T\in\T_{t+1}''\setminus\{T_1,T_2,T_3\}$ either $T_1\cap T\neq T_1\cap T_2$ or $T_1\cap T\neq T_1\cap T_3$, we similarly have $T\subset D$ for any $T\in\T_{t+1}''$ by using the fact that $\T_{t+1}''$ is $t$-intersecting. On the other hand, \eqref{eq:count-base-2} and \eqref{eq:max-assumption3} imply $|\T_{t+1}''|\ge t+2 =\binom{|D|}{t+1}$. Thus we conclude that $\T_{t+1}''=\binom{D}{t+1}$.
     \end{proof}
        Since $\T_{t+1}''=\binom{D}{t+1}$ for large $n$, by the definition of $\T_{t+1}''$ we see that $|A\cap D|\ge t+1$ for any $A\in\mH_G^r$. Additionally, it is clear that $G[D]\cong K_{t+2}$ as each $T\in\T_{t+1}''$ is contained in some edge in $\mH_G^r$. Write $\T_{t+1}''=\binom{D}{t+1}=\{T_1,\dots,T_{t+2}\}$. For $i\in [t+2]$ let $N^i:=(V(G)\setminus D)\cap \bigcap_{v\in T_i} N_G(v)$ and let $N^0:=\bigcap_{v\in D} N_G(v) = \bigcap_{i\in [t+2]} N^i$.
        Observe that for any $A\in\mH_G^r$, it follows from $|A\cap D|\ge t+1$ that either $A\cap D=T_i$ and $A\setminus T_i\subset N^i$ for some $i\in [t+2]$, or $D\subset A$ and $A\setminus D\subset N^0$. This yields
        \[
        N(K_r,G)\le N(K_{r-t-2},G[N^0]) + \sum_{i=1}^{t+2} N(K_{r-t-1},G[N^i]).
        \]

        Note that for each $i\in [t+2]$, $G[N^i]$ is $K_{r-t}$-free, since otherwise the union of $T_i$ and a copy of $K_{r-t}$ in $G[N^i]$ would give an edge $A\in\mH_G^r$ with $|A\cap D|=t< t+1$, which is impossible. Thus, by Theorem~\ref{thm:erdos}, we get
        \[
        N(K_{r-t-1},G[N^i]) \le N(K_{r-t-1},T(|N^i|,r-t-1))
        \]
        for each $i\in [t+2]$. Similarly,
        \[
        N(K_{r-t-2},G[N^0]) \le N(K_{r-t-2},T(|N^0|,r-t-1)).
        \]
        Therefore,
        \begin{align*}
        	N(K_r,G)
        	&\le N(K_{r-t-2},T(|N^0|,r-t-1)) + \sum_{i=1}^{t+2} N(K_{r-t-1},T(|N^i|,r-t-1))\\
        	&\le N(K_{r-t-2},T(n-t-2,r-t-1)) + (t+2) N(K_{r-t-1},T(n-t-2,r-t-1))\\
            & = N(K_r,K_{t+2}+T(n-t-2,r-t-1)).
        \end{align*}
       Moreover, when the equality holds, $|N^i|=n-t-2$ for $i\in [0,t+2]$; hence $N^0=N^1=\dots=N^{t+2}=V(G)\setminus D$ and $G=G[D]+G[V(G)\setminus D]$. Combining this with the second part of Theorem~\ref{thm:erdos} shows that the equality holds if and only if $G$ is isomorphic to $K_{t+2} + T(n-t-2, r-t-1)$.
     \end{proof}

 \section{Concluding remarks}\label{sec:conclusion}
 In this work, we first showed that, for $L=\{\ell_1,\dots,\ell_s\}\subset [0,r-1]$ with $\ell_1<\dots<\ell_s$ and $s\notin\{1,r\}$, as $n$ goes to infinity $\Psi_r(n,L)=\Theta_{r,L}(n^{|L|})$ if and only if the sequence $\ell_1,\dots,\ell_s,r$ is an arithmetic progression. Moreover, in this case we determined the exact values of $\Psi_r(n,L)$ for sufficiently large $n$, thereby providing a generalized Tur\'an extension of the uniform eventown problem~\cite{frankl2016uniform}. These improved a recent result of Helliar and Liu~\cite{helliar2024generalized}. Additionally, for the generalized Tur\'an extension of the Erd\H{o}s--Ko--Rado theorem due to Helliar and Liu~\cite{helliar2024generalized}, we showed a Hilton--Milner-type stability result. As remarked in \cite{helliar2024generalized}, there are many classical theorems from extremal set theory that one could consider extending to the generalized Tur\'an setting. Below we list some questions regarding potential generalized Tur\'an extensions.

 \subsection{Cliques with the cover-free property}
 It might be of interest to consider generalized Tur\'an extensions of packing-like structures, e.g., that of cover-free families. An $r$-graph $\F$ is said to be {\it $t$-cover-free} if in $\F$ no edge is contained in the union of $t$ others. It is well known \cite{erdos1985families,frankl1987colored} that the maximum size of a $t$-cover-free $r$-graph $\F\subset \binom{[n]}{r}$ is $\Theta_{r,t}(n^{\lceil r/t\rceil})$ as $n$ grows. For the corresponding generalized Tur\'an problem, define
 \[
 C_r(n,t):= \max\{ N(K_r, G):   G \text{ is an $n$-vertex graph such that $\mH_G^r$ is $t$-cover-free}\}.
 \]
 It is easy to see that \eqref{eq-Gowers-Janzer} implies $C_r(n,t)> n^{\lceil r/t\rceil-o(1)}$, where $o(1)\to 0$ as $n\to\infty$.
 \begin{question}
 	Let $r,t\ge 2$ be fixed integers. Is it true that $C_r(n,t)=o(n^{\lceil r/t\rceil})$ as $n\to\infty$?
 \end{question}
 \noindent Using the hypergraph removal lemma, we are able to show that  it is true when $r\equiv 1 \pmod t$.

 \subsection{Stability of the Erd\H{o}s matching problem in the generalized Tur\'an setting}
 The renowned Erd\H{o}s matching problem~\cite{erdos1965problem} asks for the maximum size of an $r$-graph $\mH\subset \binom{[n]}{r}$ without $k+1$ pairwise disjoint edges. In the generalized Tur\'an setting, Zhang, Chen, Gy\H{o}ri and Zhu~\cite{zhang2024maximum} recently showed that, if the associated $r$-graph $\mH_G^r$ of an $n$-vertex graph $G$ contains no $k+1$ pairwise disjoint edges, then
 \[
 N(K_r,G)\le N(K_r,K_k + T(n-k,r-1))
 \]
 for fixed $r,k$ and sufficiently large $n$. Moreover, $K_k + T(n-k,r-1)$ is the unique extremal graph. Note that when $k=1$ this is a special case of Theorem~\ref{thm:Helliar-Liu-EKR-Turan} and we have the corresponding stability result. On the other hand, Bollob\'as, Daykin and Erd\H{o}s~\cite{BDE76} proved a stability result for the Erd\H{o}s matching problem. So a natural question would be to give a stability result of the generalized Tur\'an extension of the Erd\H{o}s matching problem.

\end{document}